\documentclass[reqno]{amsart}

\usepackage[applemac]{inputenc}

\usepackage{amssymb}
\usepackage{graphicx}
\usepackage[cmtip,all]{xy}
\usepackage{verbatim}
\usepackage{tikz-cd}
\usepackage{mathtools}
\usepackage{graphicx}
\usepackage{mathrsfs}
\usepackage{geometry}

\geometry{verbose,a4paper,tmargin=30mm,bmargin=30mm,lmargin=30mm,rmargin=30mm}

\DeclareMathAlphabet{\mathpzc}{OT1}{pzc}{m}{it}

\newtheorem{theorem}{Theorem}[section]
\newtheorem{lemma}[theorem]{Lemma}

\theoremstyle{definition}
\newtheorem{definition}[theorem]{Definition}
\newtheorem{example}[theorem]{Example}

\theoremstyle{remark}
\newtheorem{remark}[theorem]{Remark}

\numberwithin{equation}{section}

\makeatletter
\DeclareRobustCommand{\cev}[1]{%
  \mathpalette\do@cev{#1}%
}
\newcommand{\do@cev}[2]{%
  \fix@cev{#1}{+}%
  \reflectbox{$\m@th#1\vec{\reflectbox{$\fix@cev{#1}{-}\m@th#1#2\fix@cev{#1}{+}$}}$}%
  \fix@cev{#1}{-}%
}
\newcommand{\fix@cev}[2]{%
  \ifx#1\displaystyle
    \mkern#23mu
  \else
    \ifx#1\textstyle
      \mkern#23mu
    \else
      \ifx#1\scriptstyle
        \mkern#22mu
      \else
        \mkern#22mu
      \fi
    \fi
  \fi
}

\makeatother

 \newcommand{\virgolette}{``}

\newcommand{\slantone}[2]{{\raisebox{.1em}{$#1$}\left/\raisebox{-.1em}{$#2$}\right.}}
\newcommand*{\defeq}{\mathrel{\vcenter{\baselineskip0.5ex \lineskiplimit0pt
                     \hbox{\scriptsize.}\hbox{\scriptsize.}}}%
                     =}

\newcommand\asim{\mathrel{%
  \ooalign{\raise0.1ex\hbox{$\sim$}\cr\hidewidth\raise-0.8ex\hbox{\scalebox{0.9}{$\scriptstyle{x}$}}\hidewidth\cr}}}

\newcommand{\mani}{\ensuremath{\mathpzc{M}}}
\newcommand{\manir}{\ensuremath{\mathpzc{M}_{red}}}
\newcommand{\stsheaf}{\ensuremath{\mathcal{O}_{\mathpzc{M}}}}

\newcommand{\beq}{\begin{equation}}
\newcommand{\eeq}{\end{equation}}
\newcommand{\bear}{\begin{eqnarray}}
\newcommand{\eear}{\end{eqnarray}}

\begin{document}

% \title[short text for running head]{full title}
\title{A Note on Super Koszul Complex and the Berezinian}

%    Only \author and \address are required; other information is
%    optional.  Remove any unused author tags.

%    author one information
% \author[short version for running head]{name for top of paper}
\author{Simone Noja}
\address{Universit\"{a}t Heidelberg}
\curraddr{Im Neuenheimer Feld 205, 69120 Heidelberg, Germany}
\email{noja@mathi.uni-heidelberg.de}
%\thanks{}

%    author two information
\author{Riccardo Re}
\address{Università degli Studi dell'Insubria}
\curraddr{Via Valleggio 11, 22100 Como, Italy}
\email{riccardo.re@uninsubria.it}
%\thanks{}

%\subjclass[2000]{Primary }
%    The 2010 edition of the Mathematics Subject Classification is
%    now available.  If you are citing a classification from the
%    new scheme, use the following input coding instead.
%\subjclass[2010]{Primary }

%\date{}

\begin{abstract} We construct the super Koszul complex of a free supercommutative $A$-module $V$ of rank $p|q$ and prove that its homology is concentrated in a single degree and it yields an exact resolution of $A$. We then study the dual of the super Koszul complex and show that its homology is concentrated in a single degree as well and isomorphic to $\Pi^{p+q} A$, with $\Pi$ the parity changing functor. Finally, we show that, given an automorphism of $V$, the induced transformation on the only non-trivial homology class of the dual of the super Koszul complex is given by the multiplication by the Berezinian of the automorphism, thus relating this homology group with the Berezinian module of $V$.

\end{abstract}

\maketitle

\tableofcontents

\section{Introduction}

\noindent The definition of the \emph{Koszul Complex}, whose first introduction as an example of a complex of free modules over a commutative ring $A$ dates back to Hilbert, marks the advent of homological methods in commutative algebra in the early `50s. Since it is a resolution of $k=A/m$ with $(A,m)$ a regular local ring, or of $R=S/(x_0,\ldots,x_r)$, with $S=R[x_0,\ldots,x_r]$, it allowed the computation of derived functors like $ Tor^i(A,M)$ and $Ext^i(A,M)$ for any $S$-module $M$,  hence of important homological invariants - like the \emph{projective dimension} of a module (or its Koszul homology) - and their relations to more classical concepts, like for example the \emph{depth of a module}.\\ 
The Koszul complex may also be constructed in a non-commutative setting, \emph{i.e.}\ for any left $A$-module $M$ with $A$ any ring and elements $x_1,\ldots,x_r$ in $A$, there exists a Koszul complex $K(x_1,\ldots, x_r,M)$, when $x_i$ are pairwise commuting when viewed as multiplication maps $x_i:M\to M$. In this paper we consider the case of a commutative \emph{superalgebra} $S$ and therefore we take the $x_i$'s to be supercommutative. 
We first provide the construction of the Koszul complex $K(x_1,\ldots,x_r)$ in the supercommutative case and then we compute its homology in the universal case $S=A[x_1,\ldots,x_r]$, with $A$ supercommutative. In doing so, we revisit the proof that it is a resolution of $A$ as a $S$-module, and then we study the \emph{dual} complex $K^\ast(x_1,\ldots,x_r)$ and calculate its homology, hence computing $Ext^i(A,S)$. In particular, this produces the \emph{Berezinian} of the free $A$-module $F=\bigoplus_i Ax_i$. \\
Although it is fair to say that results regarding the Koszul complex in a \virgolette super setting'' have previously appeared - see \cite{Manin} -, we are not aware of a complete and detailed treatment of this fundamental construction in the existing superalgebra or supergeometry literature. We take the chance to fill such a gap with the present paper, which provides also a completely self-contained exposition of the subject. 
Further, we remark that the supercommutative setting for the Koszul complex includes of course the commutative case, in a way that clearly shows the intimate supercommutative nature of the \emph{classical} Koszul complex. Moreover, the magical \emph{self-duality} of the classical Koszul complex - allowing for example for the rich theory of \emph{complete intersections} or, more generally, \emph{Gorenstein rings} in commutative algebra -, is put in the right setting within the treatment of the \emph{super} Koszul complex of this paper, where it is made clear that the dual of a super Koszul complex is not in general isomorphic to \emph{the same} Koszul complex, but to another one, depending on the numbers of even and odd  variables involved. Finally, we stress that the given construction via homology of the super Koszul complex, provides a completely invariant construction of the Berezinian of a free-module, a crucial building block for modern supersymmetric theories in theoretical physics. \\

\noindent The paper is structured as follows. In section two we establish our conventions and we provide the reader with some definitions and preliminary result. In particular, in Theorem \ref{kosclas}, we use our superalgebraic setting to compute the homology of the classic Koszul complex, which will be used later on in the paper. In section three, we construct the super Koszul complex of a free supercommutative $A$-module $V$ and we compute its homology in Theorem \ref{homologykos}. The result is built upon Lemma \ref{lemma1} and Lemma \ref{lemma2}, which compute the homotopy operator of the differential of the super Koszul complex. In this respect, in Remark \ref{remark} at the end of section three, we address the differences with the classical Koszul homology and we briefly discuss the interesting case of characteristic $p$ in the superalgebraic setting by means on an example. In section four we introduce the \emph{dual} of the super Koszul complex and we briefly discuss the functoriality of the construction. Then we proceed to compute the homology of the dual of the super Koszul complex in Theorem \ref{dualhomology}, whose proof is based on the ancillary result Theorem \ref{kosclas}. Finally, in the last section we make contact with the Berezinian module of $V$: in particular, we prove that given an automorphism of $V$, a representative of the homology of the dual of the super Koszul complex transforms with the inverse of the Berezinian of the automorphism. This allows to identify the (dual of) Berezinian module of $V$ with the only non-trivial homology module of the dual of the super Koszul complex of $V$.\\

\noindent \emph{Addendum}: soon after this paper appeared as a preprint, Prof.\ Ogievetsky makes us aware that the super Koszul complex and its dual first appeared in \cite{OP}, by him and I.B.\ Penkov - see Corollary 4 therein -, but a detailed treatment has never indeed appeared in the literature. We would like to thank him for pointing out this reference to us.

\section{Preliminary Definitions and the Koszul Complex via Superalgebra}

\noindent In this section we recall some elements of superalgebra that will be used in the following. For a thorough exposition of superalgebra we refer to the classical \cite{Manin} or the recent \cite{CCF}. \\%we just recall that the prefix \emph{super} stay for $\mathbb{Z}_2$-graded supercommutative. \\
\noindent Let $A$ be any superalgebra of characteristic $0$ and let $V = A^{p|q}$ be a free $A$-supermodule with basis given by $\{x_1,\ldots, x_p | \theta_1, \ldots, \theta_q \}$, where the $\mathbb{Z}_2$-grading, or \emph{parity}, reads $|x_i| = 0 $ and $|\theta_j| = 1$ for any $i= 1, \ldots, p$ and $j = 1, \ldots, q.$ We define 
\bear \label{erre}
R \defeq \bigoplus_{k\geq 0} R_k \quad \mbox{with} \quad R_k \defeq Sym^k V,
\eear
where $Sym^k(\,\cdot\, ) : \mathbf{SMod}_A  \rightarrow \mathbf{SMod}_A$ is the (super)symmetric $k$-power functor from the category of $A$-supermodules to itself. Henceforth we will refer to $Sym^k (\, \cdot \,)$ simply as the $k$-\emph{symmetric} product. We observe that $R$ has a structure of a $\mathbb{Z}$-graded $A$-algebra, where the products $R_i \otimes R_j \rightarrow R_{i+j} $ are induced by the symmetric product, and also of a $\mathbb{Z}_2$-graded commutative (or supercommutative) $A$-algebra - we say that it is an $A$-superalgebra - where the grading is induced by that of $V$.\\
It is worth noticing that $A$ is a $R$-module thanks to the short exact sequence 
\bear
\xymatrix{
0 \ar[r] & I_1 \ar[r] & R \ar[r] & A \ar[r] & 0, 
}
\eear
where $I_1 \defeq \bigoplus_{k\geq 1} Sym^k V$ is the (maximal) ideal of $R$ generated by $V \cong Sym^1 V \subset R$ and, hence, $A \cong \slantone{R}{I_1R}.$ The ideal $I_1$ has the following presentation:
\bear \label{pres}
\xymatrix{
R \otimes \Pi V \ar[r]^{\quad p} &  I_1 \ar[r] &0,
}
\eear 
where $\Pi : \mathbf{SMod}_A  \rightarrow \mathbf{SMod}_A $ is the \emph{parity changing functor}, acting on objects by simply reversing their parity. In the \eqref{pres} the (surjective) morphism $p : R \otimes \Pi V \rightarrow I_1$ is defined as follows on even and odd generators
\begin{align} \label{delta1}
\xymatrix@R=1.5pt{
1 \otimes \pi x_i \ar@{|->}[r]^{\quad p} & x_i,  \\
1 \otimes \pi \theta_i \ar@{|->}[r]^{\quad p} &  \theta_j.
}
\end{align}
Notice that $p$ is an \emph{odd} morphism, as it reverses parity.\\
Given $V$ as above we define its \emph{dual} as $
V^\ast \defeq {Hom}_{A} (V, A).$ This defines again a supercommutative $A$-module with parity splitting given by $V^\ast = Hom_{A} (V, A)_0 \oplus Hom_{A} (V, A)_1$, \emph{i.e.}\ the even and odd $A$-linear maps. For any $i\leq k$ there exists a pairing given by 
\bear
\xymatrix@R=1.5pt{
\langle \cdot , \cdot \rangle : Sym^i V^\ast \otimes Sym^k V \ar[r] & Sym^{k-i}V \\
t^\ast_i \otimes s_k \ar@{|->}[r] & \langle t^\ast_i, s_k \rangle. 
}
\eear
If $V$ has basis given by $\{x_1, \ldots, x_p | \theta_1, \ldots, \theta_q \}$ as above, this can be obtained by considering the dual space $V^\ast$ as the space having basis given by $\{\partial_{x_1}, \ldots, \partial_{x_p} | \partial_{\theta_1}, \ldots, \partial_{\theta_q} \}$, so that one has the identification 
\bear
Sym^i V^\ast \defeq \{ D : Sym^{k\geq i} V \rightarrow Sym^{k-i} V: D\mbox{ is a homogeneous operator of order } i \}.
\eear 
Notice in particular that, for $k=i$ one has the \emph{duality pairing} $Sym^k V^\ast \otimes Sym^k V \rightarrow {A}$.\\

\noindent Notably, the above superalgebraic setting can be used to reinterpret the construction of the \virgolette ordinary'' Koszul complex (see for example \cite{Eisenbud} for an extended treatment of the subject), and compute its homology in a very economic and elegant way. \\
Let us consider indeed a free-module $A^N$ for a certain ring or algebra $A$ with a basis given by $\{ x_1, \ldots, x_N \}$. Then one can can construct the free supermodule $A^{N|N} = A^N \oplus \Pi A^N$ with a basis given by $\{x_1, \ldots, x_N | y_1, \ldots, y_N \}$, where $|x_i| = 0$, \emph{i.e.}\ the $x_i$'s are even, and $|y_i| = 1$, \emph{i.e.}\ the $y_i$'s are odd, for any $i= 1, \ldots, N$: in this sense, the $y_i$'s can be defined as $y_i \defeq \pi x_i$, just by changing the parity of the generator of $A^N$. Notice that the $x_i$'s and the $y_i$'s are not $A$-linearly dependent.\\
Here, the supermodule $A^{N|N}$ plays the role of $V$ introduced in the above construction, so that $R$ can be written as 
\bear \label{Rkos}
R = B \oplus (B \cdot U)  \oplus ( B \cdot Sym^2 U ) \oplus \ldots \oplus ( B \cdot Sym^N U ) = B \otimes_A \bigoplus_{k = 0}^{N} Sym^k U.
\eear
%{\bf posso moltiplicare per $B$ nel framework sopra? E' come se stessi gradando in maniera diversa...} 
where we have defined
\bear
B \defeq A [x_1, \ldots, x_N] \quad \mbox{and} \quad U \defeq A [y_1, \ldots, y_N]. 
\eear
Notice that, classically, $B$ and $U$ can be seen respectively as the \emph{symmetric} and \emph{exterior} algebra over a set of $N$ generators (over $A$). Let us define the following multiplication operator on $R$:
\bear
\xymatrix@R=1.5pt{
d : R \ar[r] & R \\
b \cdot F \ar@{|->}[r] & d (b \cdot F) \defeq (\sum_{i=1}^N x_i \cdot y_i) (b \cdot F),
}
\eear
where $b\in B$ and $F \in Sym^\bullet U$. In other words the action of $d$ corresponds to the multiplication by the element $\sum_i x_i \cdot y_i$ in $R$: it is immediate to observe the operator is indeed nilpotent, \emph{i.e.}\ $d \circ d = 0$ because the multiplying element $\sum_i x_i \cdot y_i$ is odd. It follows that $d$ makes $R$ into an actual complex $R \defeq K_\bullet,$ where the $\mathbb{Z}$-grading is induced by the (super)symmetric powers of $U$ as in \eqref{Rkos} and the pair $(K_\bullet , d)$ is a \emph{differentially graded} (dg) $B$-algebra, which we call the (dual of the) \emph{Koszul complex} associated to $A^N$, or Koszul complex of $A^N$ for short. The homology of the Koszul complex is concentrated in degree $N$ as the following theorem shows. 
\begin{theorem} \label{kosclas}Let $(K_\bullet, d)$ the Koszul complex associate to $A^N$ for some $A$. Then the homology of $(R_\bullet, d)$ is concentrated in degree $N$. More in particular, we have 
\bear
H_i ((K_\bullet, d)) \cong \left \{ \begin{array}{ccc}
A \cdot y_1 \ldots y_N & & i = N \\
0 & & \mbox{else}
\end{array}
\right. 
\eear
\end{theorem}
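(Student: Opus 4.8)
The plan is to exhibit an explicit contracting homotopy built from the natural \emph{contraction} (or dual) operator on $R$, in the spirit of the superalgebraic calculus set up above. Alongside the multiplication differential $d=\sum_i x_iy_i$ I would introduce the odd operator
\beq
s \defeq \sum_{i=1}^N \partial_{x_i}\,\partial_{y_i},
\eeq
where $\partial_{x_i}$ is the usual derivation on $B=A[x_1,\dots,x_N]$ and $\partial_{y_i}$ is the (odd) contraction on $U=A[y_1,\dots,y_N]$ dual to $y_i$, so that $[\partial_{x_i},x_j]=\delta_{ij}$ and $\{\partial_{y_i},y_j\}=\delta_{ij}$. Both $d$ and $s$ are odd, and the whole computation reduces to understanding the even operator $\Delta\defeq\{d,s\}=ds+sd$, which one checks commutes with $d$.

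The key computation is the supercommutator identity
\beq \label{laplacian}
\{d,s\} \;=\; N\cdot\mathrm{id} \,+\, E_x \,-\, E_y, \qquad E_x\defeq\sum_i x_i\partial_{x_i},\quad E_y\defeq\sum_i y_i\partial_{y_i}.
\eeq
I would obtain this index by index: for $i\neq j$ all the relevant (anti)commutators are the trivial ones and $\{x_iy_i,\partial_{x_j}\partial_{y_j}\}=0$, while for $i=j$ the Leibniz rules $\partial_{x_i}x_i=1+x_i\partial_{x_i}$ and $\partial_{y_i}y_i=1-y_i\partial_{y_i}$ give $\{x_iy_i,\partial_{x_i}\partial_{y_i}\}=1+x_i\partial_{x_i}-y_i\partial_{y_i}$; summing over $i$ yields \eqref{laplacian}. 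Since $E_y$ is the $Sym$-power grading, it acts as the scalar $k$ on $K_k$, and $E_x$ acts as the scalar $m$ on the polynomials of $x$-degree $m$; hence on the bidegree $(m,k)$ piece $\Delta$ is multiplication by the integer $c_{m,k}\defeq N+m-k$.

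The crucial observation is that $d$ raises both $m$ and $k$ by one, so it preserves $k-m$ and therefore the eigenvalue $c_{m,k}$ is \emph{constant along the differential}. Consequently, on every bidegree with $c_{m,k}\neq 0$ the operator $h\defeq c_{m,k}^{-1}s$ is a genuine chain homotopy, $dh+hd=\mathrm{id}$, so those pieces are acyclic. Because $0\le k\le N$ and $m\ge 0$, the vanishing $c_{m,k}=0$ occurs only at $(m,k)=(0,N)$, i.e.\ on $A\cdot y_1\cdots y_N\subset K_N$. This forces the homology to vanish outside degree $N$; in degree $N$ the pieces with $m>0$ are killed by the same homotopy (here $hd=0$, so $dh=\mathrm{id}$ already exhibits them as boundaries), while the $(0,N)$ piece cannot be a boundary since $d$ would have to originate from the empty bidegree $(-1,N-1)$. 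Hence $H_i=0$ for $i\neq N$ and $H_N\cong A\cdot y_1\cdots y_N$.

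The main obstacle I anticipate is twofold: getting the super-signs in \eqref{laplacian} exactly right (in particular the relative minus sign coming from the odd Leibniz rule $\partial_{y_i}y_i=1-y_i\partial_{y_i}$), and justifying the division by $c_{m,k}$, which is where the characteristic-$0$ hypothesis enters---over a $\mathbb{Q}$-algebra each nonzero $c_{m,k}$ is invertible and the homotopy is literally defined over $A$. If one wishes to avoid inverting integers, the same conclusion follows by writing $(K_\bullet,d)$ as the $A$-tensor product of the $N$ two-term complexes $A[x_i]\xrightarrow{\,x_i\,}A[x_i]\cdot y_i$, each with homology $A\,y_i$ in top degree, and invoking the K\"unneth formula, which applies here since all terms and all homologies are free $A$-modules.
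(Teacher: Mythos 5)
Your proof is correct and is essentially the paper's own argument: the same homotopy operator $\sum_i \partial_{x_i}\partial_{y_i}$, the same anticommutator identity $\{d,s\} = N + E_x - E_y$ (the paper's $N - \deg(F) + \deg(b)$), the same normalization on each bidegree, and the same identification of $(m,k)=(0,N)$ as the unique non-acyclic piece --- indeed your version is slightly more careful, since you note explicitly that the eigenvalue is constant along $d$ and that the $(0,N)$ piece cannot be a boundary. Your closing K\"unneth remark (tensoring the two-term complexes $A[x_i]\xrightarrow{x_i}A[x_i]\cdot y_i$) is a genuinely different, characteristic-free route that the paper does not pursue, and it is worth keeping in mind since the homotopy argument really does require inverting the integers $N+m-k$.
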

\begin{proof} We construct a homotopy for $d : K_i \rightarrow K_{i+1}.$ To this end let us consider $h^K \defeq \sum_{i=1}^N \partial_{x_i} \partial_{y_i} : K_{i} \rightarrow K_{i-1}.$ Without loss of generality we can restrict to homogeneous elements $b\in B$ and $F \in Sym^\bullet U$ and we compute:
\begin{align}
h^K  d (b \cdot F) & = \sum_{j = 1}^N \partial_{x_j } \cdot \partial_{y_j} \left (\sum_{i = 1}^N x_i \cdot y_i ( b \cdot F ) \right ) = \sum_{i, j = 1}^N \partial_{x_j} \left ( x_i b \right) \partial_{y_j} \left ( y_i F \right ) \nonumber \\
& = \sum_{i,j = 1}^N \left ( \delta_{ij} b + x_i \cdot \partial_{x_j } f \right ) \left (\delta_{ij} F - y_i \cdot \partial_{y_j} F \right )  \nonumber \\
& = \sum_{i} \left ( \delta_{ii} b \cdot F - b \cdot  (y_i \partial_{y_i}  F ) - (x_i \partial_{x_i} b) \cdot F - (x_i \cdot y_i ) (\partial_{x_i} \partial_{y_i}) b \cdot F \right ) \nonumber \\
& = N b\cdot F - \deg (F) \, b\cdot F + \deg (b) \, b \cdot F - d \,h^K (b \cdot F).
\end{align}
This leads to 
\bear
h^K d + d \, h^K = N - \deg (F) + \deg (b).
\eear
Since $\deg (F) \leq N$ we have that $N - \deg (F) \geq 0$, so that if $\deg (b) > 0$ the sum above is never zero, and for any such pair $(\deg (b), \deg (F))$ with $\deg (b) >0$ and $0 \leq \deg (F) \leq N$ we can define a homotopy operator for $d$ as
\bear
h^K_{\deg (b), \deg (F)} \defeq \frac{h^K}{N - \deg (F) + \deg (b)} : K_{i} \rightarrow K_{i-1}. 
\eear 
The only instance in which the homotopy fails is when $\deg (F) = N$ and $\deg (b) = 0$: the generator of the corresponding module is $A\cdot y_1 \ldots y_n$ which is clearly in the kernel of $d$.
\end{proof}
\noindent The geometrical upshot of this important theorem is that the \emph{determinant} or \emph{canonical} module $\wedge^N A^N$ related to $A^N$ emerges as the (co)homology of the Koszul complex of $A^N$.  We will make use of this result later on in the paper.

\section{The Super Koszul Complex and its Homology}

\noindent In this section we define a super analog of the ordinary, \emph{i.e.}\ commutative, \emph{Koszul complex}, and we shall see that, as in the ordinary commutative setting, also in the supercommutative setting the Koszul complex yields a resolution of $A$ as a $R$-module, with $R$ given by \eqref{erre}. \\
For future convenience, given basis of $\{x_1, \ldots, x_p | \theta_1 , \ldots, \theta_q\}$ of $V = A^{p|q}$, we introduce the basis $\{ \ell_1, \ldots, \ell_q | \chi_i, \ldots, \chi_p \}$ of $\Pi V$, where the have set $\ell_j \defeq \pi \theta_j $ and $\chi_i \defeq \pi x_i$, so that $|\ell_j | = 0$ and $|\chi_i | = 1 $ for any $i=1, \ldots, p$ and $j=1, \ldots, q$. Notice that if $V$ has dimension $p|q$, then $\Pi V$ has dimension $q|p.$\\
As a warm-up, with reference to the previous section, let us consider the following composition of maps:
\bear
\xymatrix{
R \otimes Sym^2 \Pi V \ar[r]^{\delta_2} & R \otimes Sym^1 \Pi V \ar[r]^{\qquad \; \; \delta_1} & R, 
}
\eear
where $\delta_1 : R \otimes \Pi V \rightarrow R$ is given by the composition of the presentation $p : R \otimes \Pi V \rightarrow I_1 $ of the ideal $I_1 \subset R$ with the immersion $i : I_1 \hookrightarrow R$, so that $\delta_1 \defeq i \circ p : R \otimes \Pi V \rightarrow R$ acts as already defined in equation \eqref{delta1}, which employing the $\{\ell_j |\chi_i \}$-notation introduced above now reads:
\bear
\xymatrix@R=1.5pt{
1 \otimes \ell_j \ar@{|->}[r]^{\; \; \delta_1} &  \theta_j, \\
1 \otimes \chi_i \ar@{|->}[r]^{\; \; \delta_1} & x_i.
}
\eear
The map $\delta_2 : R\otimes Sym^2 \Pi V \rightarrow R \otimes \Pi V$ is defined on a basis of $R \otimes Sym^2 \Pi V$ as follows: 
\begin{align}
\xymatrix@R=1.5pt{
1 \otimes (\ell_i \odot \ell_j) = 1 \otimes (\pi \theta_i \odot \pi \theta_j) \ar@{|->}[r]^{\delta_2 \; \quad }  & \theta_i \otimes \pi \theta_j + \theta_j \otimes \pi \theta_i = \theta_i \otimes \ell_j + \theta_j \otimes \ell_i, \\
1 \otimes (\ell_i \odot \chi_k)= 1 \otimes (\pi \theta_i \odot \pi x_k)  \ar@{|->}[r]^{\delta_2 \; \quad } & \theta_i \otimes \pi x_k + x_k \otimes \pi \theta_i = \theta_i \otimes \chi_k + x_k \otimes \ell_j \\
1 \otimes (\chi_k \odot \chi_l) = 1 \otimes (\pi x_k \odot \pi x_l ) \ar@{|->}[r]^{\delta_2 \; \quad } & x_k \otimes \pi x_l - x_l \otimes \pi x_k = x_k \otimes \chi_l - x_l \otimes \chi_k.
}
\end{align}
It is straightforward to observe that these elements are in kernel of the map $\delta_1$,  
%\begin{align}
%\xymatrix@R=1.5pt{
%\theta_i \otimes \ell_j + \theta_j \otimes \ell_i = \theta_i \otimes \pi \theta_j + \theta_j \otimes \pi \theta_i \ar@{|->}[r]^{\quad \qquad \qquad \delta_1  } & \theta_j \theta_i + \theta_i \theta_j = 0 \\
%\theta_i \otimes \chi_k + x_k \otimes \ell_j  = \theta_i \otimes \pi x_k + x_k \otimes \pi \theta_i  \ar@{|->}[r]^{\quad \qquad \qquad \delta_1  } & - x_j \theta_i + \theta_i x_j = 0 \\
%x_k \otimes \chi_l - x_l \otimes \chi_k = x_k \otimes \pi x_l - x_l \otimes \pi x_k \ar@{|->}[r]^{\quad \qquad \qquad \delta_1  } & x_l x_k - x_k x_l = 0,
%}
%\end{align}
so that one has that $\delta_2 \circ \delta_1 = 0.$ This is not by accident and indeed the above construction can be made general. \\
Having already defined $R \defeq \oplus_{k\geq 0} Sym^k V$, we further introduce 
\bear
R^\pi \defeq \bigoplus_{k\geq 0} R_k^\pi \quad \mbox{with} \quad R_k^\pi \defeq Sym^k \Pi V,
\eear
and in turn, we define the tensor product of $R$ and $R^\pi$ over $A$: 
\bear
\mathcal{K}_\bullet \defeq \bigoplus_{k \geq 0 } \mathcal{K}_{-k} = R \otimes_A \bigoplus_{k \geq 0 } R^{\pi}_k  = R \otimes_A R^\pi.
\eear
Clearly, $\mathcal{K}_\bullet$ is an $A$-superalgebra, as both $R$ and $R^\pi$ are. We now introduce the following $A$-superalgebra homomorphism:
\bear
\xymatrix@R=1.5pt{
\delta : \mathcal{K}_\bullet = R \otimes_A R^\pi \ar[r] & \mathcal{K}_\bullet = R \otimes_A R^\pi \\
r \otimes r^\pi \ar@{|->}[r] & \delta (r \otimes r^\pi ) \defeq \left ( \sum_{i=1}^p x_i \otimes \partial_{\chi_i} + \sum_{j=1}^q \theta_j \otimes \partial_{\ell_j} \right ) (r \otimes r^\pi), 
}
\eear
where $r \in R $ and $r^\pi \in R^\pi.$ It is immediate to observe the following facts: 
\begin{enumerate}
\item with respect to the supercommutative structure of $\mathcal{K}_\bullet$, that is with respect to the $\mathbb{Z}_2$-gradation, $\delta$ is \emph{odd}, \emph{i.e.} $|\delta|= 1$;
\item with respect to the $\mathbb{Z}$-gradation of $\mathcal{K}_\bullet$ as an $R$-module, $\delta$ is homogeneous of degree $-1$.
\item $\delta$ acts as a \emph{derivation} only on the factor $R^\pi$. It follows that $\delta$ is $R$-linear on $\mathcal{K}_\bullet = R \otimes_A R^\pi$ endowed with the structure of an $R$-module (actually, $R$-algebra) induces by its factor $R$.
\end{enumerate}
We have the following theorem.
\begin{theorem} The pair $(\mathcal{K}_\bullet, \delta)$ defines a differentially graded \mbox{\emph{(dg)}} $R$-algebra.
\end{theorem}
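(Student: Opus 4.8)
The plan is to verify the two defining conditions of a differentially graded $R$-algebra that are not already recorded in the three observations preceding the statement, namely that $\delta$ squares to zero and that it is a graded derivation of the product on $\mathcal{K}_\bullet$. Observation (2) already supplies homogeneity of $\delta$ in the $\mathbb{Z}$-degree $-1$, observation (1) records that $\delta$ is odd for the parity, and observation (3) gives its $R$-linearity together with its being a derivation on the tensor factor $R^\pi$; moreover the underlying graded $R$-algebra structure on $\mathcal{K}_\bullet = R \otimes_A R^\pi$ is already in place. So only nilpotency and the Leibniz rule remain.

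First I would check $\delta \circ \delta = 0$ directly. Writing $\delta = \sum_{i=1}^p (x_i \otimes \partial_{\chi_i}) + \sum_{j=1}^q (\theta_j \otimes \partial_{\ell_j})$ as a sum of operators on $R \otimes_A R^\pi$, I would compose $\delta$ with itself and expand, keeping track of the Koszul sign arising when an operator $a \otimes b$ is composed past $c \otimes d$, namely $(a\otimes b)(c\otimes d) = (-1)^{|b||c|}(ac)\otimes(bd)$, where one must use the parities $|\partial_{\chi_i}|=1$ and $|\partial_{\ell_j}|=0$ throughout. The expansion splits into three kinds of terms. The even--even terms $\sum_{i,k} (x_i x_k)\otimes(\partial_{\chi_i}\partial_{\chi_k})$ vanish because $x_i x_k$ is symmetric in $i \leftrightarrow k$ while the odd derivations satisfy $\partial_{\chi_i}\partial_{\chi_k} = -\partial_{\chi_k}\partial_{\chi_i}$; dually, the odd--odd terms $\sum_{j,l}(\theta_j\theta_l)\otimes(\partial_{\ell_j}\partial_{\ell_l})$ vanish because $\theta_j\theta_l$ is antisymmetric while $\partial_{\ell_j}\partial_{\ell_l}$ is symmetric. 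The mixed terms pairing an $x$-summand with a $\theta$-summand survive individually, and the crux is to see that they cancel in pairs: using $x_i\theta_l = \theta_l x_i$, the vanishing supercommutator $[\partial_{\chi_i},\partial_{\ell_l}]=0$ of an odd and an even derivation, and the opposing Koszul signs $(-1)^{|\partial_{\chi_i}||\theta_l|}=-1$ versus $(-1)^{|\partial_{\ell_l}||x_i|}=+1$, the two orderings contribute with opposite signs and sum to zero. Hence $\delta^2=0$.

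Then I would verify the graded Leibniz rule $\delta(\alpha\beta) = \delta(\alpha)\,\beta + (-1)^{|\alpha|}\,\alpha\,\delta(\beta)$ for the supercommutative product on $\mathcal{K}_\bullet = R\otimes_A R^\pi$. Since each $\partial_{\chi_i}$ and $\partial_{\ell_j}$ is by construction a graded derivation of $R^\pi$ while the accompanying factors $x_i,\theta_j$ act by multiplication in $R$, observation (3) already identifies $\delta$ as a derivation along the $R^\pi$-factor; it then remains only to check that the Koszul signs produced by the product of $R\otimes_A R^\pi$ reproduce the sign $(-1)^{|\alpha|}$ demanded by the Leibniz rule. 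This is a routine sign check on homogeneous tensors $\alpha = r_1\otimes s_1$ and $\beta = r_2\otimes s_2$, after which the dg $R$-algebra axioms are all in hand.

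The main obstacle is purely the sign bookkeeping in the $\delta^2=0$ computation: every cancellation is forced by the interplay between the (anti)symmetry of the coefficient monomials in $R$ and that of the corresponding second-order derivations in $R^\pi$, together with the Koszul signs of operator composition, so the only real risk is a mislabelled parity in one of the four families of terms.
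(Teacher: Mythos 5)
Your proof is correct, but it takes a genuinely different route from the paper's. The paper establishes $\delta^2 = 0$ by exploiting the $R$-algebra structure: since $\mathcal{K}_\bullet$ is generated over $R$ by $\mathcal{K}_{-1} \cong R \otimes \Pi V$, it suffices to check $\delta^2(1 \otimes \ell_j) = 0$ and $\delta^2(1 \otimes \chi_i) = 0$ --- which is immediate, because $\delta$ sends these generators into $R \otimes 1$, on which $\delta$ vanishes (it differentiates only the $R^\pi$ factor) --- and then to induct on the $\mathbb{Z}$-degree via the graded Leibniz rule, where the two cross terms $\pm\,\delta(s)\delta(t)$ cancel by parity. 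You instead expand the operator composition $\delta \circ \delta$ directly and show that all four families of terms cancel through the interplay of (anti)symmetry of the coefficient monomials in $R$ with that of the second-order derivations on $R^\pi$, together with the Koszul signs of composition; your sign analysis (the even--even and odd--odd blocks vanishing by index symmetry, the mixed blocks cancelling in pairs via $(-1)^{|\partial_{\chi_i}||\theta_l|} = -1$ against $(-1)^{|\partial_{\ell_l}||x_i|} = +1$) is accurate. Each approach buys something: the paper's argument is shorter, essentially sign-free, and illustrates the standard dg-algebra technique of verifying identities on generators; but it consumes the Leibniz rule as an input, treating the derivation property of $\delta$ on all of $\mathcal{K}_\bullet$ as built into the construction. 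Your computation is more self-contained --- it proves nilpotency without presupposing the Leibniz rule, and you then verify Leibniz (with its Koszul signs) explicitly rather than leaving it implicit in observation (3), which is a point the paper indeed glosses over. Your direct expansion also parallels the argument the paper later uses for the dual differential $\delta^\ast$ (nilpotency of multiplication by an odd element), making the structural reason for $\delta^2 = 0$ more transparent.
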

\begin{proof} We have already observed that $\delta$ is $R$-linear. We need to prove that $\delta^2 \defeq \delta \circ \delta = 0$. We observe that $\mathcal{K}_\bullet $ is generated as an $R$-algebra by $Sym^1\Pi V \cong \Pi V$, which is $\mathcal{K}_{-1}$ upon tensoring with $R$, \emph{i.e.} looking at $\mathcal{K}_\bullet$ as an $R$-algebra. It follows that it is enough to verify that $\delta^2 (1 \otimes \ell_j) = 0$ and $\delta^2 (1 \otimes \chi_i)$ for any $j = 1, \ldots, q$ and $i = 1, \ldots, p$, where $\{ 1 \otimes \ell_j | 1 \otimes \chi_i \}$ are the generators of $\Pi V$, and then work by induction on the $\mathbb{Z}$-degree of $\mathcal{K}_\bullet.$  \\
Obviously, one has
\begin{align}
\delta^2 (1 \otimes \ell_j) = \delta (x_j \otimes 1) = 0, \nonumber \\
\delta^2 (1 \otimes \chi_j) = \delta (\theta_j \otimes 1) = 0, 
\end{align}
which settle the case $\mathcal{K}_{-1}.$ Let us now assume that $s \in \mathcal{K}_{-1}$ and $t \in \mathcal{K}_{-k}$ for $k \geq 1$ satisfying $\delta^2 (t) = 0$ by induction hypothesis. By Leibniz rule, for an element $s\cdot t \in \mathcal{K}_{-k-1},$ one has
\begin{align}
\delta^{2} (s \cdot t) &= \delta^2 (s) t + (-1)^{|\delta||\delta (s)| } \delta (s) \delta (t) + (-1)^{|\delta| |s| } \delta (s) \delta (t) + (-1)^{2|\delta| |s|}s \delta^{2} (t)  \nonumber \\
& = (-1)^{|s| + 1 } \delta (s) \delta (t) + (-1)^{|s| } \delta (s) \delta (t) + s \delta^{2} (t) \nonumber \\
& = s \delta^2 (t) = 0
\end{align}
by induction hypothesis, recalling that $|\delta| = 1$ so that, in particular $|\delta (s)| = s +1.$  
\end{proof}
\noindent The previous theorem justifies the following definition.
\begin{definition}[Super Koszul Complex] Given any free $A$-module $V = A^{p|q}$ for any superalgebra $A$, we call the pair $(\mathcal{K}_\bullet, \delta)$ the \emph{super Koszul complex} associated to $V$: 
\bear
\xymatrix{
\cdots \ar[r]^{ \delta \qquad } & R \otimes Sym^k \Pi V \ar[r]^{\qquad \delta} & \cdots \ar[r]^{\delta \qquad } & R \otimes Sym^2 \Pi V \ar[r]^{\quad \delta} & R \otimes \Pi V \ar[r]^{\quad \delta} & R \ar[r] & 0.  
}
\eear
\end{definition}
\noindent We are interested into studying the homology of the super Koszul complex. In order to do this, we start with some preliminary considerations. \\
Let $f \in A[x_1, \ldots, x_p | \theta_1, \ldots, \theta_q] = Sym^\bullet V$ be a \emph{bi-homogeneous} polynomial, \emph{i.e.}\ homogeneous in the $x_i$'s and in the $\theta_j$'s. Then the map $f \mapsto \deg f$, which associates to $f$ its bi-homogeneous degree is well-defined. %For example, 
%\bear
%\deg (x_1^2 x_2 \theta_1 \theta_2 + x^3_2 \theta_2 \theta_3) = 5. 
%\eear
Now, let $E$ be the \emph{Euler vector fields,} \emph{i.e.} the differential operator defined 
\bear
E \defeq \sum_{i=1}^p x_i \partial_{x_i} + \sum_{j=1}^q \theta_q \partial_{\theta_q}.
\eear 
and acting on polynomials.
Then one has the following easy lemma, which mimic the ordinary one for the commutative case.
\begin{lemma} \label{Euler} Let $f(x_1,\ldots, x_p | \theta_1, \ldots, \theta_q) \in A[x_1, \ldots, x_p | \theta_1, \ldots, \theta_q] $ a bi-homogeneous polynomial and let $E \defeq \sum_{i=1}^p x_i \partial_{x_i} + \sum_{j=1}^q \theta_q \partial_{\theta_q}$ be the {Euler vector field}. Then
\bear
(\deg f ) f = E (f),
\eear
where $\deg f$ is the degree of the bi-homogeneous polynomial $f$. 
\end{lemma}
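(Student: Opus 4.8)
The plan is to prove the super-analog of the classical Euler identity by reducing to monomials and verifying the formula directly, exploiting the linearity of both sides. First I would observe that both the assignment $f \mapsto (\deg f) f$ and the operator $E$ are $A$-linear, so it suffices to check the identity on bi-homogeneous monomials of the form $f = x_1^{a_1} \cdots x_p^{a_p} \, \theta_1^{\epsilon_1} \cdots \theta_q^{\epsilon_q}$, where $a_i \geq 0$ are integers and $\epsilon_j \in \{0,1\}$ (since the $\theta_j$ are odd and hence square to zero). For such a monomial, $\deg f = \sum_i a_i + \sum_j \epsilon_j$ by definition of the bi-homogeneous degree.

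Next I would compute $E(f)$ term by term. For the even part, each $x_i \partial_{x_i}$ acts as the ordinary degree operator, contributing $a_i \, f$ exactly as in the commutative case, since the $x_i$ commute with everything and differentiation in an even variable is the familiar one. For the odd part, the key point is that $\theta_j \partial_{\theta_j}$ is the \emph{number operator} for the odd variable $\theta_j$: applying $\partial_{\theta_j}$ removes a factor of $\theta_j$ (picking up a Koszul sign from anticommuting past the preceding odd factors) and then left-multiplication by $\theta_j$ reinserts it, restoring the original sign. Hence $\theta_j \partial_{\theta_j}(f) = \epsilon_j \, f$: it returns $f$ if $\theta_j$ appears and $0$ otherwise. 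Summing the even and odd contributions gives $E(f) = \left(\sum_i a_i + \sum_j \epsilon_j\right) f = (\deg f)\, f$, which is exactly the claim.

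The main subtlety — rather than a genuine obstacle — is bookkeeping the signs in the odd sector and confirming that $\theta_j \partial_{\theta_j}$ indeed acts as the correct number operator regardless of the position of $\theta_j$ in the monomial. I would make this precise by noting that, in the supercommutative convention fixed in Section~2, the odd derivation $\partial_{\theta_j}$ is defined so that $\partial_{\theta_j}(\theta_k) = \delta_{jk}$ and it anticommutes with odd elements; the two signs incurred in removing and reinserting $\theta_j$ cancel, so no net sign survives. Once this is settled the computation is immediate, and since the general $f$ is an $A$-linear combination of such monomials, the identity $(\deg f) f = E(f)$ extends by linearity to every bi-homogeneous polynomial, completing the proof.
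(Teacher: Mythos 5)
Your proof is correct and follows exactly the paper's approach: the paper's own proof is the one-line remark that the identity is obvious from the case of monomials, and your argument simply carries out that monomial verification in detail (including the sign bookkeeping showing $\theta_j \partial_{\theta_j}$ acts as a number operator), then extends by $A$-linearity.
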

\begin{proof} Obvious, as if follows from the case of monomials.
\end{proof}
\noindent This ancillary result will be useful in the computation of the homology of the super Koszul complex. % We recall that in general, given a differential complex $(\mathcal{A}_\bullet, \delta_\bullet)$, the existence of a \emph{homotopy operator}, \emph{i.e.}\
%an operator $ h_{\bullet} : \mathcal{A}_\bullet \rightarrow \mathcal{A}_\bullet$, with $h_i: A_{-i} \rightarrow A_{-i-1}$, such that $ h_{i+1} \circ \delta_i + \delta_{i-1} \circ h_i = id_{A_i}$,
%\bear
%\xymatrix{
%\cdots \ar[r] & A_{-i-1 } \ar[r] & A_{-i}\ar[dl]_{h_i} \ar[r]^{\delta_i} & A_{-i+1} \ar[dl]_{h_{i+1}} \ar[r] & \cdots \\
%\cdots \ar[r] & A_{-i-1 } \ar[r]^{\delta_{i-1}} & A_{-i} \ar[r] & A_{-i+1} \ar[r] & \cdots \\
%}
%\eear
%is equivalent for the complex to have vanishing homology, $H_\bullet (\mathcal{A}_\bullet) = 0$. \\
The crucial result in this direction is the construction of a \emph{homotopy operator} ${h}^{\mathcal{K}}_\bullet: \mathcal{K}_{\bullet} \rightarrow \mathcal{K}_{\bullet}$ for the differential of the super Koszul complex, with $h^{\mathcal{K}}_i : \mathcal{K}_{-i} \rightarrow \mathcal{K}_{-i-1}$ such that $h^{\mathcal{K}}_{i+1} \circ \delta_i + \delta_{i-1} \circ h^{\mathcal{K}}_i = id_{\mathcal{K}_i}$. We organize the construction of the homotopy in two consequential lemmas. 
\begin{lemma} \label{lemma1} Let $(\mathcal{K}_{\bullet}, \delta)$ be the super Koszul complex associated to some $V = A^{p|q}$ with basis as above. Then the operator $\varepsilon : \mathcal{K}_{\bullet} \rightarrow \mathcal{K}_{\bullet} $ defined as
\bear
\varepsilon \defeq \sum_{i=1}^p \partial_{x_i} \otimes \chi_i + \sum_{j=1}^q \partial_{\theta_j} \otimes \ell_j
\eear
is such that 
\bear
\delta \circ \varepsilon + \varepsilon \circ \delta = E \otimes id_{R^\pi} + id_R \otimes E^\pi,
\eear
for $E$ the Euler vector field acting on $R = Sym^\bullet V$ and $E^\pi$ the Euler vector field acting on $R^\pi = Sym^\ast \Pi V$.
\end{lemma}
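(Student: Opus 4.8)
The plan is to verify the anticommutator identity directly by composing the two operators and simplifying, exploiting the fact that $\delta$ and $\varepsilon$ act by multiplication/derivation on the two tensor factors in a controlled way. First I would record the structure of the two maps: $\delta = \sum_i x_i \otimes \partial_{\chi_i} + \sum_j \theta_j \otimes \partial_{\ell_j}$ acts as a derivation on the $R^\pi$-factor and by multiplication on the $R$-factor, while $\varepsilon = \sum_i \partial_{x_i} \otimes \chi_i + \sum_j \partial_{\theta_j} \otimes \ell_j$ acts as a derivation on the $R$-factor and by multiplication on the $R^\pi$-factor. Because both are odd operators of pure $\mathbb{Z}$-degree ($\delta$ of degree $-1$, $\varepsilon$ of degree $+1$), the combination $\delta\circ\varepsilon + \varepsilon\circ\delta$ is even and $\mathbb{Z}$-degree $0$, so it is at least plausible that it lands in an operator like $E\otimes id + id\otimes E^\pi$.

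The key computation is to expand $\delta\varepsilon + \varepsilon\delta$ on a bi-homogeneous generator $r\otimes r^\pi$. When I compose, each term pairs a multiplication operator on one factor with a derivation on the \emph{same} factor, e.g.\ the cross terms produce expressions of the form $x_i\partial_{x_j}\otimes\chi_i\partial_{\chi_j}$ together with their $\varepsilon\delta$-counterparts. The heart of the matter is that the off-diagonal terms ($i\neq j$, or mixing even and odd variables) must cancel in pairs because of the sign rules governing the passage of odd operators past one another, while the diagonal terms ($i=j$) combine — via the Leibniz/commutation relations $\partial_{x_i}x_i = 1 + x_i\partial_{x_i}$ on the even side and $\partial_{\chi_i}\chi_i = 1 - \chi_i\partial_{\chi_i}$ on the odd side — to reconstruct exactly the Euler operators $\sum_i x_i\partial_{x_i} + \sum_j \theta_j\partial_{\theta_j}$ on $R$ and its analogue on $R^\pi$. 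This is the same mechanism as in the proof of Theorem \ref{kosclas}, where $h^K d + d h^K$ collapsed to $N - \deg(F) + \deg(b)$; here the count of basis elements gets replaced by the genuine Euler grading operators because $\delta$ and $\varepsilon$ now involve genuine multiplication by $x_i,\theta_j$ rather than constants.

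I expect the main obstacle to be \textbf{bookkeeping of the Koszul signs}. The operators $\partial_{\theta_j}$, $\partial_{\chi_i}$, $\chi_i$ and $\ell_j$ are odd, so each time one slides past another a sign $(-1)$ appears, and since $\delta\varepsilon$ and $\varepsilon\delta$ differ by the order in which the two odd pieces act on each factor, one must check carefully that the unwanted terms appear with opposite signs and cancel, while the wanted (diagonal) terms survive with the correct coefficient. In particular the odd sector contributes $\partial_{\theta_j}\theta_j = 1 - \theta_j\partial_{\theta_j}$, so one has to confirm that the two Euler fields assembled on the two factors both come out with the same sign and add rather than cancel. Once the signs are organized — most cleanly by treating the even block ($x_i,\chi_i$) and the odd block ($\theta_j,\ell_j$) separately and invoking Lemma \ref{Euler} to read off the degree operators — the identity follows, and the remaining induction-free verification on generators suffices since all four operators are first-order and the claimed identity is $R\otimes R^\pi$-local in the variables.
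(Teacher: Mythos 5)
Your proposal is correct and takes essentially the same route as the paper: a direct computation of the anticommutator $\delta \circ \varepsilon + \varepsilon \circ \delta$, exploiting that $\delta$ differentiates only the $R^\pi$-factor while $\varepsilon$ differentiates only the $R$-factor, so that the diagonal terms assemble into $E \otimes id_{R^\pi} + id_R \otimes E^\pi$ via the relations $\partial_a a = 1 \pm a\partial_a$. If anything, your treatment is more careful than the paper's, whose displayed intermediate formulas for $\delta \circ \varepsilon$ and $\varepsilon \circ \delta$ separately already omit the second-order cross terms, which in fact appear in each composition and — exactly as you point out — only cancel (by the odd/even sign rules) once the two compositions are summed.
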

\begin{proof}
% To this end, let us consider the following operator $ \varepsilon_\bullet : \mathcal{K}_\bullet \rightarrow \mathcal{K}_{\bullet}$, defined as 
%\bear
%\varepsilon \defeq \sum_{i=1}^p \partial_{x_i} \otimes \chi_i + \sum_{j=1}^q \partial_{\theta_j} \otimes \ell_j.
%\eear
We first notice that notice that $\varepsilon$ defined as above rises the degree by one in $\mathcal{K}_\bullet, \emph{i.e.}$\ $\varepsilon : \mathcal{K}_{i} \rightarrow \mathcal{K}_{i+1}$ since it multiplies by the elements $\chi_i$ and $\ell_j$. Also, it is \emph{odd} - likewise $\delta$ - and it is symmetric to $\delta$, in that it is a derivation on the first factor of $R \otimes_A R^{\pi} = \mathcal{K}_\bullet$, whilst $\delta$ was a derivation on the second factor of $R \otimes_A {R}^\pi = \mathcal{K}_{\bullet}.$\\
Let us now compute the commutator $[\delta, \varepsilon] = \delta \circ \varepsilon + \varepsilon \circ \delta$. By applying the definitions, one has that
\begin{align} 
& \delta \circ \varepsilon = \sum_{i=1}^p x_i \partial_{x_i} \otimes id_{R^{\pi}} + \sum_{j = 1}^q\theta_j \partial_{\theta_j} \otimes id_{R^\pi},\\
& \varepsilon \circ \delta = id_{R} \otimes \sum_{i = 1}^p \chi_i \partial_{\chi_i} + id_{R} \otimes \sum_{j = 1}^q \ell_j \partial_{\ell_j},
 \end{align}
so that one has 
\bear
[\delta, \varepsilon] = \left ( \sum_{i=1}^p x_i \partial_{x_i} + \sum_{j = 1}^q\theta_j \partial_{\theta_j} \right ) \otimes id_{R^\pi} + id_{R} \otimes \left (\sum_{i = 1}^p \chi_i \partial_{\chi_i} + \sum_{j = 1}^q \ell_j \partial_{\ell_j} \right ),
\eear
which in turn can be rewritten as 
\bear
[\delta, \varepsilon] = E \otimes id_{R^\pi} + id_R \otimes E^\pi,
\eear
where $E$ and $E^\pi$ are the Euler vector fields acting on $R$ and $R^\pi$ respectively. 
\end{proof}
\noindent Upon a suitable normalization, the operator $\varepsilon $ defined in the previous lemma allows us to write the homotopy for the Koszul complex, as we show in the following.
\begin{lemma}[Homotopy of $\delta$] \label{lemma2} Let $(\mathcal{K}_{\bullet}, \delta)$ be the super Koszul complex associated to some $V = A^{p|q}$ with basis as above and let $\varepsilon : \mathcal{K}_{\bullet} \rightarrow \mathcal{K}_{\bullet} $ be defined as in Lemma \ref{lemma1}. Then the operator $\varepsilon_{k,i} \defeq \frac{1}{k+i} \varepsilon $ with
\bear
\xymatrix{
\varepsilon_{k,i} \defeq \frac{1}{k+i} \varepsilon : Sym^k V \otimes Sym^i \Pi V \ar[r] & Sym^{k-1} V \otimes Sym^{k+1} V,
}
\eear
is such that 
\bear
\delta \circ \varepsilon_{k, i} + \varepsilon_{k+1, i-1} \circ \delta =  {id_R \otimes id_{R^\pi}}. 
\eear
In particular, $h^{\mathcal{K}}_\bullet \defeq \oplus_{k,i \geq 0} \varepsilon_{k,i} : \mathcal{K}_{\bullet} \rightarrow \mathcal{K}_{\bullet}$ with 
\bear
h_i^\mathcal{K} \defeq \oplus_{k \geq 0} \varepsilon_{k,i} : \mathcal{K}_{-i } = R \otimes_A R_i^\pi  \longrightarrow \mathcal{K}_{-i-1} = R \otimes_A R^{\pi}_{i+1}
\eear
defines a homotopy for the differential of the super Koszul complex. 
\end{lemma}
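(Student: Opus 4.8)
The plan is to reduce the claimed identity to the bi-homogeneous summands $Sym^kV\otimes Sym^i\Pi V$ of $\mathcal{K}_\bullet$ and to combine Lemma \ref{lemma1} with Lemma \ref{Euler}. First I would record the degree bookkeeping implicit in the definitions of $\delta$ and $\varepsilon$: since $\delta$ multiplies in the $R$-factor and differentiates in the $R^\pi$-factor, it sends $Sym^kV\otimes Sym^i\Pi V$ to $Sym^{k+1}V\otimes Sym^{i-1}\Pi V$, while $\varepsilon$ does the opposite, sending $Sym^kV\otimes Sym^i\Pi V$ to $Sym^{k-1}V\otimes Sym^{i+1}\Pi V$. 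Consequently both composites $\delta\circ\varepsilon$ and $\varepsilon\circ\delta$ preserve each summand $Sym^kV\otimes Sym^i\Pi V$, and the two maps appearing in the statement are well-defined endomorphisms of it.

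The crucial point — and the reason a single operator, merely rescaled degree by degree, yields a genuine chain homotopy — is that $\varepsilon_{k,i}$ and $\varepsilon_{k+1,i-1}$ carry the \emph{same} scalar, since $\frac{1}{(k+1)+(i-1)}=\frac{1}{k+i}$. I would therefore factor this common constant out and write, on the summand $Sym^kV\otimes Sym^i\Pi V$,
\[
\delta\circ\varepsilon_{k,i}+\varepsilon_{k+1,i-1}\circ\delta=\frac{1}{k+i}\bigl(\delta\circ\varepsilon+\varepsilon\circ\delta\bigr).
\]
By Lemma \ref{lemma1} the bracket equals $E\otimes id_{R^\pi}+id_R\otimes E^\pi$, and by Lemma \ref{Euler}, applied to $E$ on the $V$-variables and to its evident analogue for $E^\pi$ on the $\Pi V$-variables, this operator acts on $Sym^kV\otimes Sym^i\Pi V$ as multiplication by $k+i$. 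Hence the right-hand side collapses to $\frac{k+i}{k+i}\,id=id$, which is exactly the asserted identity. The hypothesis that $A$ has characteristic $0$ enters here precisely to guarantee that the positive integer $k+i$ is invertible.

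It then remains to assemble these local identities. Since the displayed equality holds on every summand with $k+i>0$, the direct sum $h^\mathcal{K}_\bullet=\bigoplus_{k,i}\varepsilon_{k,i}$ satisfies $\delta\circ h^\mathcal{K}+h^\mathcal{K}\circ\delta=id$ in each homological degree, which is the desired contracting-homotopy relation. The one place demanding care, and the only genuine (if mild) obstacle, is the exceptional summand $k=i=0$, namely $Sym^0V\otimes Sym^0\Pi V=A\subset\mathcal{K}_0$, on which the scalar $\frac{1}{k+i}$ is undefined and no homotopy can be produced. This is not a flaw but the heart of the matter: it is exactly the locus where contractibility fails, and where the homology of the super Koszul complex computed in Theorem \ref{homologykos} will turn out to be concentrated. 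Accordingly I would state the homotopy identity for $(k,i)\neq(0,0)$ and observe separately that the generator of this bottom summand lies in $\ker\delta$, leaving its homological interpretation to that theorem.
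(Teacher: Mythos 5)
Your proposal is correct and follows essentially the same route as the paper: decompose $\mathcal{K}_\bullet$ into bi-homogeneous summands, observe that $\varepsilon_{k,i}$ and $\varepsilon_{k+1,i-1}$ share the scalar $\tfrac{1}{k+i}$, and combine the commutator identity of Lemma \ref{lemma1} with the Euler-field Lemma \ref{Euler} to collapse $\tfrac{1}{k+i}\left(E\otimes id_{R^\pi}+id_R\otimes E^\pi\right)$ to the identity. Your explicit flagging of the exceptional summand $k=i=0$ is a welcome precision that the paper only makes in the proof of Theorem \ref{homologykos}, but it does not constitute a different argument.
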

\begin{proof}
%Now, let us define the following operator 
%\bear
%\xymatrix{
%\varepsilon_{k,i} \defeq \frac{1}{k+i} \varepsilon : Sym^k V \otimes Sym^i \Pi V \ar[r] & Sym^{k-1} V \otimes Sym^{k+1} V.
%}
%\eear
Notice that $\varepsilon_{k,i} : R_k \otimes R_{i}^\pi \rightarrow R_{k-1} \otimes R_{i+1}^\pi$ is just a normalization of $\varepsilon.$ Further, notice that, symmetrically, the Koszul differential $\delta$ acts on the homogeneous factors of $\mathcal{K}_{\bullet} = R \otimes_A R^\pi$ as $\delta : R_k \otimes R_{i}^\pi \rightarrow R_{k+1} \otimes R^\pi_{i-1}.$ 
We can thus consider the following diagram
\bear
\xymatrix{
\cdots \ar[r] & R_{k-1} \otimes R^\pi_{i+1} \ar[r] & R_{k} \otimes R_{i} \ar@<-1ex>[d] \ar@<1ex>[d] \ar[dl]_{\varepsilon_{k,i}} \ar[r]^{\delta_i\; \; \;} & R_{k+1} \otimes R_{i-1}\ar[dl]^{\varepsilon_{k+1, i-1 }} \ar[r] & \cdots \\
\cdots \ar[r] & R_{k-1} \otimes R^\pi_{i+1}  \ar[r]^{\; \; \; \delta_{i-1}} & R_{k} \otimes R_{i} \ar[r] & R_{k+1} \otimes R_{i-1} \ar[r] & \cdots
}
\eear 
We first observe that, as for the normalization, one has 
\bear
\varepsilon_{k,i} = 
\frac{1}{k+i} \varepsilon = \varepsilon_{k+1,i-1}.
\eear
Therefore, it follows from the computation of the commutator in the previous lemma \ref{lemma1} that we can set
\begin{align}
\delta \circ \varepsilon_{k, i} + \varepsilon_{k+1, i-1} \circ \delta = \frac{1}{k+i} [\delta, \varepsilon ] = \frac{1}{k+i} \left (E \otimes id_{R^\pi} + id_R \otimes E^\pi \right ).
\end{align}
Applying lemma \ref{Euler}, since the degrees corresponds to the powers $k$ and $i$ for $R_k$ and $R^{\pi}_i$ respectively, one gets,
\begin{align}
\frac{1}{k+i} E \otimes id_{R^\pi} + id_R \otimes E^\pi = \frac{k+i}{k+i} {id_R \otimes id_{R^\pi}} = {id_R \otimes id_{R^\pi}}, 
\end{align}
establishing the first part of the lemma. For the second part it is enough to observe that the $(-1)$-th terms of the super Koszul complex can be written as $
\mathcal{K}_{-i} = R \otimes_A Sym^i \Pi V = \left ( \bigoplus_{k\geq 0} R_k \right ) \otimes_A  R^\pi_i
$. %we can thus take the homotopy operator $h^\mathcal{K}_\bullet : \mathcal{K}_\bullet \rightarrow \mathcal{K}_\bullet$ for the Koszul supercomplex to be defined as
%\bear
%h^\mathcal{K}_i \defeq \oplus_{k\geq 0} \varepsilon_{k, i} :  R \otimes_A R_i^\pi = \mathcal{K}_{-i }  \longrightarrow R \otimes_A R^{\pi}_{i+1} = \mathcal{K}_{-i-1}.
%\eear
\end{proof}
\noindent The previous lemma allows to compute the homology of the super Koszul complex, the main result of the present section.
\begin{theorem}[Homology of $\mathcal{K}_\bullet$] \label{homologykos} The homology of the super Koszul complex $(\mathcal{K}_\bullet, \delta)$ is given by
\bear
H_i (( \mathcal{K}_\bullet, \delta ) ) \cong \left \{ \begin{array}{ccc}
A   & &  i = 0\\
0  & & i \neq 0.
\end{array}
\right.
\eear
In particular $(\mathcal{K}_\bullet, \delta)$ is an exact resolution of $A$ endowed with the structure of $R$-module.
\end{theorem}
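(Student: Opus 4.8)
The plan is to let the homotopy operator $h^{\mathcal{K}}_\bullet$ of Lemma \ref{lemma2} do essentially all of the work. Recall that on each bigraded summand $R_k \otimes_A R_i^\pi$ of $\mathcal{K}_\bullet$ one has the relation $\delta \circ h^{\mathcal{K}} + h^{\mathcal{K}} \circ \delta = \mathrm{id}$, which is valid precisely when $k+i \neq 0$ (this is exactly where the normalization $\tfrac{1}{k+i}$, and hence the characteristic-zero hypothesis on $A$, is needed). The single summand on which the relation fails is the constant piece $R_0 \otimes_A R_0^\pi = A$ sitting inside $\mathcal{K}_0 = R$. I would first isolate this observation and then split into the two cases $i \geq 1$ and $i = 0$.

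For the vanishing in positive degree I would run the standard null-homotopy argument. Fix $i \geq 1$ and take a cycle $z \in \ker(\delta \colon \mathcal{K}_{-i} \to \mathcal{K}_{-i+1})$. Every summand $R_k \otimes_A R_i^\pi$ of $\mathcal{K}_{-i}$ has $k+i \geq 1 > 0$, so the homotopy relation holds on all of $\mathcal{K}_{-i}$, giving $z = \delta(h^{\mathcal{K}}_i(z)) + h^{\mathcal{K}}_{i-1}(\delta z) = \delta(h^{\mathcal{K}}_i(z))$. Thus $z$ is a boundary and $H_i = 0$.

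For $H_0$ I would note that $\delta$ vanishes on $\mathcal{K}_0 = R$ (it lowers the $R^\pi$-degree and $R_{-1}^\pi = 0$), so $H_0 = \mathrm{coker}(\delta \colon \mathcal{K}_{-1} \to R)$. Applying the homotopy relation on the summands $R_k \otimes R_0^\pi = R_k$ with $k \geq 1$, the term $h^{\mathcal{K}}_{-1}\circ\delta$ again vanishes and we get $\delta \circ h^{\mathcal{K}}_0 = \mathrm{id}$ there, so $\bigoplus_{k\geq 1} R_k = I_1$ lies in the image of $\delta$; conversely the image is contained in $I_1$ since the differential sends the generators to $x_i, \theta_j$. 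Hence $\mathrm{im}(\delta) = I_1$ and $H_0 = R/I_1 \cong A$. Combining both cases yields the stated homology, and since the augmentation $R \to A$ has kernel exactly $I_1 = \mathrm{im}(\delta)$, the augmented complex $\cdots \to \mathcal{K}_{-1} \to R \to A \to 0$ is exact, exhibiting $(\mathcal{K}_\bullet, \delta)$ as a resolution of $A$.

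The genuinely delicate computation has already been carried out in Lemma \ref{lemma2}, so the main obstacle here is purely a matter of bookkeeping: verifying that the homotopy relation covers every bidegree except $(0,0)$, and confirming that this single exceptional summand is honestly a non-bounding cycle contributing the surviving copy of $A$, rather than a spurious defect of the construction.
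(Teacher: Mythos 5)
Your proposal is correct and follows essentially the same route as the paper: both let the homotopy operator of Lemma \ref{lemma2} annihilate the homology in every bidegree $(k,i)$ with $k+i>0$, leaving only the constant summand $R_0\otimes_A R_0^\pi\cong A$ in bidegree $(0,0)$. Your write-up is merely more explicit in the bookkeeping (separating the null-homotopy argument in degree $i\geq 1$ from the identification $\mathrm{im}(\delta)=I_1$ giving $H_0\cong R/I_1\cong A$), which the paper compresses into the observation that the homotopy fails only at $i=k=0$.
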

\begin{proof} It is enough to observe that the homotopy operator $h_\bullet^\mathcal{K} : \mathcal{K}_\bullet \rightarrow \mathcal{K}_\bullet$ introduced in the previous lemma \ref{lemma2} is defined for $k+i > 0.$ It follows that, when $h^\mathcal{K}_\bullet$ is defined, \emph{i.e.}\ for any $k+i \geq 0$, we have 
\bear
H_i (\mathcal{K}_\bullet)_k = 0,
\eear
where $H_i (\mathcal{K}_\bullet)_k$ is the $R$-degree $k$-component of $H_i (\mathcal{K}_\bullet)$, with the structure of $\mathbb{Z}$-graded $R$-module inherited by that of $\mathcal{K}_\bullet$. The only homology group of $\mathcal{K}_{\bullet} $ which is not annihilated by the homotopy corresponds to the choice $i=0=k$, \emph{i.e.}\ ${H}_0 (\mathcal{K}_\bullet)_0$. In this case the complex reads
\bear
\xymatrix{
\cdots \ar[r] & 0 \ar[r] & 0 \ar[r] & A \ar[r] &  0,
}
\eear
so that $H_0 (\mathcal{K}_\bullet)_0 = A,$ and the result follows.
\end{proof}
%{\bf NOTA: si può scrivere come $Tor_i^R (A,R) = H_i ( \mathcal{K}_\bullet)$?}
\begin{remark} \label{remark} Before we pass to the next section, a remark is now in order. Indeed the ring $R$ entering in the construction of the super Koszul complex is a ring of polynomials in variables that are by no means a \emph{regular sequence} in general, due to the presence of odd variables. This is the reason why the usual induction proof - that can be found for example in \cite{Eisenbud} -  of the analog of Theorem \ref{homologykos} in the commutative case does \emph{not} extend to the supercommutative case, thus leading us to make use of the homotopy previously constructed in Lemmas \ref{lemma1} and \ref{lemma2}. It is worth noticing by the way that the above computation of the homology breakdown in the case the characteristic of $A$ is different than zero, leading to an interesting and richer scenario as the following example shows.
\end{remark}
\begin{example}[Homology of Super Koszul Complex in $\mbox{char}(A ) = p$] Let us set $\mbox{char} (A) = 3$, for example $A \defeq \mathbb{Z}_3$ and let us consider two variable, $x$ even and $\theta$ odd and set, as above, $\chi = \pi x$ and $\ell = \pi \theta$. We therefore have $V = \mathbb{Z}_3 x \oplus \mathbb{Z}_3 \theta = \mathbb{Z}_3^{1|1}$ and hence 
\bear
R \defeq Sym^\bullet V = \mathbb{Z}_3 [x | \theta], \qquad R^\pi \defeq Sym^\bullet \Pi V = \mathbb{Z}_3 [\ell | \chi]. 
\eear
This leads to consider the super Koszul complex given by $R\otimes_{\mathbb{Z}} R^\pi = \mathbb{Z}_3 [x, \ell | \chi, \theta]$, having differential defined by $\delta = x \partial_{\chi} + \theta \partial_{\ell}.$ In the notation $(\mathcal{K}_\bullet , \delta)$ we have that 
\bear
\xymatrix{
\ldots \ar[r] & \mathcal{K}_{-3} \ar[r]^{\delta} & \mathcal{K}_{-2} \ar[r]^{\delta} &  \mathcal{K}_{-1} \ar[r]^{\delta} & \mathcal{K}_0 \ar[r] & 0,
}
\eear
corresponds to 
\bear
\xymatrix{
\ldots \ar[r] & R \cdot \ell^3 \oplus R \cdot \ell^2 \chi \ar[r]^{\delta} & R\cdot \ell^2 \oplus R\cdot \ell \chi \ar[r]^{\delta} &  R \cdot \ell \oplus R \cdot \chi \ar[r]^{\qquad \delta} & R \ar[r] & 0.
}
\eear
Now consider the element $\theta \ell^2 \in \mathcal{K}_{-2}$: it is straightforward to check that it is a cycle, \emph{i.e.} $\delta (\theta \ell^2) = 0$. On the other hand, for any element $\tau \in \mathcal{K}_{-3}$, with $
\tau \defeq f (x | \theta ) \, \ell^3 + g (x | \theta) \, \ell^2 \chi \in \mathcal{K}_{-3},$
one has that $\delta (\tau) =  2 \theta g (x| \theta) \ell \chi + x g(x | \theta) \ell^2 $, so that in particular $\theta \ell^2$ is not a boundary, \emph{i.e.} there is no $\tau \in \mathcal{K}_{-3}$ such that $\theta \ell^2 \neq \delta (\tau)$ and therefore $[\theta \ell^3 ] \in H_2 ((\mathcal{K}_\bullet, \delta )) \neq 0.$
%\begin{align}
%\delta (\tau) = 3 \theta f (x | \theta) \ell^2 + 2 \theta g (x| \theta) \ell \chi + x g(x | \theta) \ell^2 \equiv  2 \theta g (x| \theta) \ell \chi + x g(x | \theta) \ell^2 
%\end{align}
\end{example}

\section{The Dual of the Super Koszul Complex and its Homology}

\noindent Given the super Koszul complex $(\mathcal{K}_\bullet, \delta)$ associated to $V= A^{p|q}$, we can define its \emph{dual} via the functor $Hom_R (- , R)$, for $R = Sym^\bullet V$ as above. Doing so, one gets the pair
$
(\mathcal{K}_\bullet^\ast, \delta^\ast ) \defeq (Hom_R (\mathcal{K}_\bullet, R), Hom_R (\delta, R)). 
$
Defining 
\bear
R^{\pi \ast}_k \defeq \bigoplus_{k \geq 0 } R^{\pi \ast}_i \quad \mbox{with} \quad R^{\pi \ast}_k \defeq Sym^k \Pi V^\ast
\eear
the complex $\mathcal{K}^\ast_\bullet$ is thus given by
\bear
\mathcal{K}_{\bullet}^\ast = \bigoplus_{k \geq 0} \mathcal{K}_{k}^\ast = R \otimes_A \bigoplus_{k \geq 0} R_k^{\pi \ast} = R \otimes_A R^{\pi \ast} %\quad \mbox{with} \quad \mathcal{K}_{i}^\ast  \defeq R \otimes_A R^{\pi \ast }_i, 
\eear
%having defined, by similarity with what above, $R^{\pi \ast}_i \defeq Sym^i \Pi V^\ast$, so that $R^{\pi\ast} \defeq \oplus_{i \geq 0} R^{\pi\ast}_i$ and hence 
%\bear
%\mathcal{K}_\bullet^\ast = R \otimes_A R^{\pi \ast}.
%\eear 
Notice that $R^{\pi \ast}$ is a $A$-superalgebra generated by the elements $\{ \partial_{\ell_1}, \ldots, \partial_{\ell_q} | \partial_{\chi_1}, \ldots, \partial_{\chi_p} \}$ for $\ell_i \defeq \pi \theta_i$ and $\chi_j \defeq \pi x_j$ for $i = 1, \ldots, q$ and $j = 1, \ldots, p.$ \\
The operator $\delta^\ast : \mathcal{K}^\ast_{\bullet} \rightarrow \mathcal{K}^\ast_{\bullet}$ is formally identical to $\delta : \mathcal{K}_{\bullet} \rightarrow \mathcal{K}_{\bullet}$, the differential of the super Koszul complex introduced above, but what is crucial to note is that $\delta^\ast$ should now be seen as the \emph{multiplication operator} by the \emph{odd} element $\sum_j x_j \otimes \partial_{\chi_j } + \sum_i \theta_i \otimes \partial_{\ell_i}$ in the superalgebra $R \otimes_A R^{\pi \ast}$. Once this is acknowledged, we still write $\delta^\ast$ as
\bear
\delta^\ast \defeq \sum_{j=1}^p x_j \otimes \partial_{\chi_j } + \sum_{i=1}^q \theta_i \otimes \partial_{\ell_i}.
\eear  
Further, note that since $\delta^\ast$ acts as the multiplication by an odd element, it is automatically \emph{nilpotent}, \emph{i.e.}\ $\delta^\ast \circ \delta^\ast = 0$: this justifies the following definition.
\begin{definition}[Dual of the Super Koszul Complex] Given any free $A$-module $V= A^{p|q}$ for any superalgebra $A$, we call the pair $(\mathcal{K}_{\bullet}^\ast, \delta^\ast)$ the dual of the super Koszul complex associated to $V$.
\bear
\xymatrix{ 
0 \ar[r] & R \ar[r]^{\delta^\ast \quad} & R \otimes \Pi V^\ast \ar[r]^{\delta^\ast \quad} & R \otimes Sym^2 \Pi V^\ast \ar[r]^{\qquad \quad \delta^\ast} & \ldots \ar[r]^{\delta^\ast\qquad \; \;} & R \otimes Sym^k \Pi V^\ast \ar[r]^{\quad \qquad \delta^\ast} & \ldots 
}
\eear
\end{definition}
\noindent Before we go on and study the homology of this complex, let us briefly discuss the \emph{functoriality} of the above construction, as to understand the properties of the functor $V \mapsto \mathcal{K}^{\ast}_{\bullet}.$ \\
Given two $A$-supermodules $V$ and $W$, applying the functor $Sym^\bullet (-) : \mathbf{SMod}_A \rightarrow \mathbf{SAlg}_A $, one gets the $A$-superalgebras $R^V \defeq \bigoplus_{i\geq 0} Sym^i V,$ and $R^W \defeq \bigoplus_{i\geq 0} Sym^i W$. As for the arrows, given a homomorphism $f: V \rightarrow W$ of $A$-supermodules, the action of the functor yields a supercommutative $A$-algebra morphism:
\bear
f \longmapsto Sym^\bullet (f) : R^V \longrightarrow R^W.
\eear
Likewise, considering a second homomorphism $f^{\pi} : \Pi V \rightarrow \Pi W$, upon applying $Sym^\bullet (-)$ to the direct sum $f \oplus f^{\pi} : V \oplus \Pi V \rightarrow  W \oplus \Pi W$, one gets
\bear
f \oplus f^{\pi} \longmapsto Sym^{\bullet} (f \oplus f^{\pi}) : R^V \otimes_A R^{V \pi} \longrightarrow R^W \otimes_A R^{W\pi},
\eear
which corresponds to a morphism between the super Koszul complex associated to $V$ - we call it $\mathcal{K}_\bullet^V$ - and the super Koszul complex associated to $W$ - we call it $\mathcal{K}^{W}_\bullet$:
\bear
f^{VW}_\bullet \defeq Sym^{\bullet} (f \oplus f^{\pi}) : \mathcal{K}^V_{\bullet} \longrightarrow \mathcal{K}_{\bullet}^W.
\eear
Let us apply the functor $Hom_{R^W} (- , R^W)$: one has the following commutative triangle
\bear
\xymatrix{
\mathcal{K}^V_\bullet \ar[dr]_{h^{W}( {\mathcal{K}_\bullet^V)}} \ar[rr]^{f^{VW}_\bullet} && \mathcal{K}^W_\bullet \ar[dl]^{h^{W}( {\mathcal{K}_\bullet^W)}}  \\
& R^W.
}
\eear
The action on the functor on the morphisms gives the following map
\bear \label{functor}
\xymatrix@R=1.5pt{
(f^{VW}_\bullet)^\ast : Hom_{R^W} (\mathcal{K}^W_\bullet, R^W) \ar[r] & Hom_{R^W} (\mathcal{K}_\bullet^V, R^W)\\
h^{W}( {\mathcal{K}_\bullet^W)} \ar@{|->}[r] & h^{W} (\mathcal{K}_\bullet^V) \defeq h^{W}( {\mathcal{K}_\bullet^W)} \circ f^{VW}_{\bullet},
}
\eear
where we have defined $(f^{VW}_\bullet)^\ast \defeq Hom_{R^W} (f^{VW}_{\bullet} , R^W).$ It follows that the functor $V \mapsto \mathcal{K}_{\bullet}^\ast$ is \emph{not} strictly a contravariant functor. Indeed observing that 
\bear
Hom_{R^W} (\mathcal{K}_\bullet^V, R^W) = Hom_{R^V} (\mathcal{K}_\bullet^V, R^V) \otimes_{R^V} \otimes R^W = \mathcal{K}^{V^\ast}_\bullet \otimes_{R^V} R^W,
\eear the previous \eqref{functor} reads
\bear
\xymatrix@R=1.5pt{
(f^{VW}_\bullet)^\ast : \mathcal{K}^{W\ast}_\bullet \ar[r] & \mathcal{K}_\bullet^{V\ast} \otimes_{R^V} R^W.
}
\eear
Nonetheless, let us assume that $f \in Aut (V)$ - for example, $f$ is a change of basis. Then, in this case, we have a map
$
(f^{V}_\bullet)^\ast : \mathcal{K}^{V\ast}_{\bullet} \longrightarrow \mathcal{K}^{V\ast}_\bullet. 
$ More precisely one finds
\bear \label{contr}
(f^{V}_\bullet)^\ast = Sym^\bullet (f \oplus (f^\pi)^t) : \mathcal{K}^{V\ast}_\bullet \longrightarrow \mathcal{K}^{V\ast}_{\bullet}.
\eear
and one gets a contravariant functor $V \mapsto \mathcal{K}^{V\ast}_{\bullet}$ with $(f^{V}_\bullet \circ g^{ V}_\bullet )^\ast = (g^{V}_\bullet )^{\ast} \circ (f^{ V}_\bullet)^\ast, $ as can be readily checked. Finally, notice that because of their particular form, the homological operators $\delta$ and $\delta^\ast$, differential of the super Kozsul complex and its dual respectively, are \emph{invariant} under change of basis, \emph{i.e.}\ automorphisms of $V$. \\

\noindent As done in the previous section for the super Koszul complex $(\mathcal{K}_{\bullet}, \delta)$, we are now interested into computing the homology of the dual of the super Koszul complex $(\mathcal{K}_\bullet^\ast, \delta^\ast)$. \\
To this end, recalling that the operator $\delta^\ast = \sum_i x_i \otimes \partial_{\chi_i} + \sum_j \theta_j \otimes \partial_{\ell_j}$ is now looked as a \emph{multiplication} operator in the algebra $\mathcal{K}_{\bullet}^\ast= R \otimes R^{\pi \ast}$,
%\bear
%\xymatrix@R=1.5pt{
%\delta^\ast : R \otimes_A R^{\pi \ast} \ar[r] & R \otimes_A R^{\pi \ast} \\
%r \otimes s \ar@{|->}[r]  & \delta^\ast \cdot (r \otimes s).
%}
%\eear
one immediately gets an \emph{inclusion of ideals}. \\
Indeed, clearly, $(\delta^\ast)^2  = 0$, so the element corresponding to 
\bear
\delta^\ast \defeq \sum_{j=1}^p x_j \otimes \partial_{\chi_j } + \sum_{i=1}^q \theta_i \otimes \partial_{\ell_i} \in V \otimes \Pi V^\ast 
\eear 
is in the kernel of $\delta^\ast$ seen as the multiplication operator, \emph{i.e.} $\delta^\ast \subseteq \ker (\delta^\ast)$. \\
On the other hand it is immediate to observe that also the element 
\bear 
\mathcal{D} \defeq \theta_1\ldots \theta_q \otimes \partial_{\chi_1} \ldots \partial_{\chi_p} \in Sym^q V \otimes Sym^p \Pi V^\ast
\eear 
is in the kernel of $\delta^\ast$, since both the factor in $Sym^q V$ and the factor in $Sym^p \Pi V^\ast$ are completely antisymmetric and when they get multiplied by another odd term coming from $\delta$ they yield zero, \emph{i.e.}\ $\mathcal{D} \in \ker (\delta^\ast).$ We therefore have the following inclusion of ideals 
\bear
\left (\delta^\ast, \;  \mathcal{D} \right ) \subseteq \mbox{Ker} (\delta^\ast),
\eear
where, as it is customary, $(\delta^\ast, \mathcal{D})$ denotes the ideal generated by the elements $\delta^\ast$ and $\mathcal{D}$. We now prove that such an inclusion is indeed an equality. 
\begin{lemma} \label{kerteo} Let $(\mathcal{K}^\ast, \delta^\ast)$ the dual of the super Koszul complex associated to $V = A^{p|q}$ for some $A$. Then $
\ker (\delta^\ast) = \left ( \delta^\ast, \mathcal{D} \right )$. In particular 
\bear
\ker (\delta^\ast) / \mbox{\emph{im}}\, (\delta^\ast) = (\mathcal{D}).
\eear
\end{lemma}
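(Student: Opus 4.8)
The inclusion $(\delta^\ast, \mathcal{D}) \subseteq \ker(\delta^\ast)$ has already been established, so the plan is to prove the reverse inclusion $\ker(\delta^\ast) \subseteq (\delta^\ast, \mathcal{D})$. The decisive observation is that, once we forget the $\mathbb{Z}$-grading coming from $Sym^\bullet \Pi V^\ast$ and regard $\delta^\ast$ as multiplication by the odd element
$$\omega \defeq \sum_{j=1}^p x_j\, \partial_{\chi_j} + \sum_{i=1}^q \theta_i\, \partial_{\ell_i},$$
the algebra $\mathcal{K}_\bullet^\ast = R \otimes_A R^{\pi\ast}$ is a free supercommutative $A$-algebra on the $p+q$ even (polynomial) generators $x_1,\ldots,x_p,\partial_{\ell_1},\ldots,\partial_{\ell_q}$ and the $p+q$ odd (exterior) generators $\theta_1,\ldots,\theta_q,\partial_{\chi_1},\ldots,\partial_{\chi_p}$. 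Pairing each even generator with an odd one via $(x_j,\partial_{\chi_j})$ for $j=1,\ldots,p$ and $(\partial_{\ell_i},\theta_i)$ for $i=1,\ldots,q$, the element $\omega$ acquires exactly the shape $\sum_{k=1}^{p+q}(\text{even})_k(\text{odd})_k$ of the classical Koszul element. In other words, $(\mathcal{K}_\bullet^\ast,\delta^\ast)$ is, after this regrading by total odd degree, precisely the Koszul complex of $A^{p+q}$ treated in Theorem \ref{kosclas}. One must only keep in mind that the natural grading of the dual complex differs from this classical Koszul grading; this is harmless, since $\ker(\delta^\ast)$ is a property of the \emph{operator} and is insensitive to the choice of grading.

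With this dictionary in hand, I would import the homotopy $h^K$ constructed in the proof of Theorem \ref{kosclas} — here it is the sum over the $p+q$ pairs of the derivative in an even generator composed with the derivative in its paired odd generator. On a bi-homogeneous element of even-degree $n$ (in the $x,\partial_\ell$) and odd-degree $m$ (in the $\theta,\partial_\chi$) it satisfies
$$h^K \delta^\ast + \delta^\ast h^K = (p+q) - m + n.$$
Since $\delta^\ast$ is itself bi-homogeneous (it raises both $n$ and $m$ by one), any $f \in \ker(\delta^\ast)$ decomposes into bi-homogeneous pieces, each again lying in $\ker(\delta^\ast)$; hence it suffices to handle a single such piece $f$ of bi-degree $(n,m)$. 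As $0 \le m \le p+q$ and $n \ge 0$, the integer $(p+q)-m+n$ is nonnegative and vanishes precisely when $m = p+q$ and $n = 0$.

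If $(p+q)-m+n > 0$, then — using $\mathrm{char}(A)=0$, so this integer is invertible — the identity gives $f = \delta^\ast\big(\tfrac{1}{(p+q)-m+n}\, h^K f\big)$, so that $f \in \mathrm{im}(\delta^\ast) = (\delta^\ast)$. If instead $(p+q)-m+n = 0$, then $f$ contains all $p+q$ odd generators and no even generator, whence $f \in A\cdot \theta_1\cdots\theta_q\,\partial_{\chi_1}\cdots\partial_{\chi_p} = A\,\mathcal{D} \subseteq (\mathcal{D})$. In either case $f \in (\delta^\ast,\mathcal{D})$, and by linearity $\ker(\delta^\ast) \subseteq (\delta^\ast,\mathcal{D})$, giving the asserted equality. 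The final statement then drops out at once: $\mathrm{im}(\delta^\ast) = (\delta^\ast)$, so $\ker(\delta^\ast)/\mathrm{im}(\delta^\ast) = (\delta^\ast,\mathcal{D})/(\delta^\ast)$ is the cyclic module generated by the class of $\mathcal{D}$, that is $(\mathcal{D})$.

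The main obstacle is not the computation but the bookkeeping behind the identification in the first paragraph: correctly recognizing which generators play the role of the even and of the odd Koszul variables, verifying that $\omega$ is genuinely the Koszul element for $N=p+q$, and checking that the single exceptional class is exactly $\mathcal{D}$ (up to a unit), all while keeping the dual-complex grading and the classical Koszul grading conceptually separate. Once this correspondence is pinned down, the rest is the routine homotopy inversion already carried out in Theorem \ref{kosclas}.
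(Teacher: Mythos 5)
Your proof is correct and follows essentially the same route as the paper: both regroup the generators of $\mathcal{K}^\ast_\bullet = R \otimes_A R^{\pi\ast}$ into $p+q$ even variables $(x_1,\ldots,x_p,\partial_{\ell_1},\ldots,\partial_{\ell_q})$ paired with $p+q$ odd variables $(\partial_{\chi_1},\ldots,\partial_{\chi_p},\theta_1,\ldots,\theta_q)$, recognize $\delta^\ast$ as the classical Koszul multiplication element for $A^{p+q}$, and conclude via Theorem \ref{kosclas}. The only difference is cosmetic: the paper simply cites Theorem \ref{kosclas} at that point, whereas you re-run its homotopy argument explicitly, which is the same content.
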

\begin{proof} The proof of this lemma relies on the ordinary Koszul complex construction of the first section. Let us start simplifying the notation. It can be observed that, by definition
%Let us first ease the notation. One can observe that
\bear
R \otimes_A R^{\pi \ast} = A [x_1, \ldots x_p, \partial_{\ell_1}, \ldots, \partial_{\ell_q} | \partial_{\chi_1}, \ldots, \partial_{\chi_p} , \theta_1, \ldots, \theta_q ],
\eear
where the even and odd generators have been grouped together. Posing $N \defeq p+q$ we define
\begin{align}
& (u_1, \ldots, u_N) \defeq \left ( x_1, \ldots x_p, \partial_{\ell_1}, \ldots, \partial_{\ell_q} \right ), \nonumber \\
& (\psi_1, \ldots, \psi_N) \defeq \left (  \partial_{\chi_1}, \ldots, \partial_{\chi_p} , \theta_1, \ldots, \theta_q \right ),
\end{align}
so that, upon this redefinition, $\delta^\ast$ and $\mathcal{D}$ read  
\begin{align}
& \delta^\ast \defeq \sum_{i=1}^N u_i \psi_i \in A[u_1, \ldots, u_N , \psi_1, \ldots, \psi_N ], \nonumber \\
& \mathcal{D} = \prod_{j=1}^N \psi_i \in A[u_1, \ldots, u_N , \psi_1, \ldots, \psi_N].
\end{align}
as elements of the ring $A[u_1, \ldots, u_N , \psi_1, \ldots, \psi_N ]$.
Let us pose $B \defeq A [u_1, \ldots, u_N]$ so that one has $A[u_1, \ldots, u_N, \psi_1, \ldots, \psi_N] = B[\psi_1, \ldots, \psi_N]$. By anticommutativity of the $\psi_i$'s one has
\bear
B[\psi_1, \ldots, \psi_N] = B \oplus \sum_{i=1}^N B \cdot \psi_i \oplus \ldots \oplus B \cdot (\psi_1 \ldots \psi_N),
\eear
but this is nothing but the ordinary (dual of the) Koszul complex $K_\bullet$ introduced above in \eqref{Rkos} and the result follows from theorem \ref{kosclas}, that proved that the homology is generated over $A$ by the element $\mathcal{D} =  \psi_1 \ldots \psi_N.$ 
\end{proof}

\noindent Recalling that we have proved in the previous section that the super Koszul complex $\mathcal{K}_\bullet$ is an exact resolution of $A$ seen as $R$-module, %\emph{i.e.} the following is an exact sequence
%\bear
%\xymatrix{
%\ldots \ar[r] & \ar[r] \mathcal{K}_i \ar[r] & \ldots \ar[r] & \mathcal{K}_1 \ar[r] &  \mathcal{K}_0 \ar[r] & A \ar[r] & 0 
%}
%\eear
%This and the previous theorem allow to compute the cohomology of the dual of the Koszul supercomplex $\mathcal{K}^\ast_\bullet = Hom_R (\mathcal{K}_\bullet, R),$
%\bear
%\xymatrix{
%0 \ar[r] & \ar[r] Hom_R (\mathcal{K}_0, R)  \ar[r] & Hom_R (\mathcal{K}_1, R) \ar[r] & \ldots \ar[r] & Hom_R ( \mathcal{K}_i , R)  \ar[r] & \ldots. 
%}
%\eear
%Recalling 
and that $Ext^{i}_R (A, R) \defeq H^i (Hom_R (\mathcal{K}_\bullet, R)) = H^i ((\mathcal{K}_\bullet^\ast, \delta^\ast ))$, we can finally compute the homology of the dual of super Koszul complex.
\begin{theorem}[Homology of $\mathcal{K}^\ast_\bullet$] \label{dualhomology}The homology of the dual of the Koszul supercomplex $(\mathcal{K}_\bullet^\ast, \delta^\ast)$ is given by
\bear
Ext^i_R (A, R) = \left \{ \begin{array}{ccl}
\Pi^{p+q} A  & & i = p\\
0  & & \mbox{else}.
\end{array}
\right.
\eear
In particular, in the above notation, a generator is given by the element $\theta_1 \ldots \theta_q \otimes \partial_{\chi_1 } \ldots \partial_{\chi_p} \in \mathcal{K}_p^\ast.$
\end{theorem}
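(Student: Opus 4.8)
The plan is to assemble three results already in hand. First, by Theorem \ref{homologykos} the super Koszul complex $(\mathcal{K}_\bullet, \delta)$ is an exact resolution of $A$ by the free $R$-modules $\mathcal{K}_{-k} = R \otimes_A Sym^k \Pi V$, hence a projective resolution of $A$ as an $R$-module. Therefore $Ext^i_R(A,R)$ can be computed from it, and since $(\mathcal{K}^\ast_\bullet, \delta^\ast) = (Hom_R(\mathcal{K}_\bullet, R), Hom_R(\delta, R))$ by construction, I would conclude $Ext^i_R(A,R) \cong H^i((\mathcal{K}^\ast_\bullet, \delta^\ast))$. This reduces the theorem to computing the cohomology of the dual complex in each degree.

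Next I would invoke Lemma \ref{kerteo}. Viewing $\delta^\ast$ as multiplication by the odd element $\sum_j x_j \otimes \partial_{\chi_j} + \sum_i \theta_i \otimes \partial_{\ell_i}$ and relabelling variables to match the ordinary Koszul complex of Theorem \ref{kosclas} on $N = p+q$ generators, that lemma gives the \emph{total} cohomology as the cyclic $A$-module $\ker(\delta^\ast)/\mbox{im}(\delta^\ast) = A \cdot \mathcal{D}$ generated by $\mathcal{D} = \theta_1 \cdots \theta_q \otimes \partial_{\chi_1} \cdots \partial_{\chi_p}$. What remains is to place this class in the correct cohomological degree and to record its $\mathbb{Z}_2$-graded structure.

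For the degree, I would observe that $\delta^\ast$ raises the cohomological $\mathbb{Z}$-grading of $\mathcal{K}^\ast_\bullet = R \otimes_A R^{\pi\ast}$ (the degree in $Sym^\bullet \Pi V^\ast$) by exactly one, since each summand multiplies the $R^{\pi\ast}$-factor by a degree-one generator $\partial_{\chi_j}$ or $\partial_{\ell_i}$ while acting by a cohomological-degree-zero element $x_j$ or $\theta_i$ on $R$. Hence $\ker(\delta^\ast)$ and $\mbox{im}(\delta^\ast)$ are homogeneous and $A \cdot \mathcal{D} = \bigoplus_i H^i(\mathcal{K}^\ast_\bullet)$ splits by cohomological degree. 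Since the $R^{\pi\ast}$-factor $\partial_{\chi_1} \cdots \partial_{\chi_p}$ of $\mathcal{D}$ lies in $Sym^p \Pi V^\ast$ and $A$ acts in degree zero, the whole module $A \cdot \mathcal{D}$ sits in cohomological degree $p$, giving $H^i = 0$ for $i \neq p$ and $H^p = A \cdot \mathcal{D}$. Finally, for the module structure: $A \cdot \mathcal{D}$ is free of rank one over $A$, and its parity equals $|\mathcal{D}| = p + q \pmod 2$, being the product of $q$ odd factors $\theta_i$ and $p$ odd factors $\partial_{\chi_j}$; thus $A \cdot \mathcal{D} \cong \Pi^{p+q} A$, as claimed, with generator $\mathcal{D}$.

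The main obstacle is the grading bookkeeping in the previous step. Lemma \ref{kerteo} computes the cohomology after discarding the cohomological $\mathbb{Z}$-grading of $\mathcal{K}^\ast_\bullet$ and reorganizing everything around an auxiliary grading counting the odd factors $\partial_{\chi_1}, \ldots, \partial_{\chi_p}, \theta_1, \ldots, \theta_q$, which places the generator at top Koszul degree $N = p+q$. This auxiliary grading does not coincide with the cohomological degree of $\mathcal{K}^\ast_\bullet$, which counts only the $\Pi V^\ast$-factors $\partial_{\chi_j}, \partial_{\ell_i}$; one must re-grade $\mathcal{D}$ by its $Sym^\bullet \Pi V^\ast$-degree to see that it lands in degree $p$, not $p+q$. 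The argument closes cleanly precisely because the total homology is the cyclic module $A \cdot \mathcal{D}$ rather than a larger ideal, so all of it is concentrated in the single degree carried by its generator.
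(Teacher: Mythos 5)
Your proposal is correct and follows essentially the same route as the paper: reduce to the dual complex via the free resolution of Theorem \ref{homologykos}, invoke Lemma \ref{kerteo} (itself resting on Theorem \ref{kosclas}) to get the cyclic module $A\cdot\mathcal{D}$, and then locate $\mathcal{D}=\theta_1\ldots\theta_q\otimes\partial_{\chi_1}\ldots\partial_{\chi_p}\in Sym^q V\otimes_A Sym^p\Pi V^\ast$ in cohomological degree $p$ with parity $p+q$. Your explicit remark distinguishing the auxiliary Koszul grading (top degree $N=p+q$) from the $Sym^\bullet\Pi V^\ast$-grading of $\mathcal{K}^\ast_\bullet$ is exactly the point the paper condenses into its ``More precisely one sees that\ldots'' sentence, so no discrepancy remains.
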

\begin{proof} Recovering the original notation, one has the following correspondence
\bear
\psi_1 \ldots \psi_N = \theta_1 \ldots \theta_q \otimes \partial_{\chi_1} \ldots \partial_{\chi_p} \in \mathcal{K}^\ast_\bullet = R \otimes_A R^{\pi\ast}
\eear
More precisely one sees that $\theta_1 \ldots \theta_q \otimes \partial_{\chi_1} \ldots \partial_{\chi_p} \in Sym^q V \otimes_A Sym^p \Pi V^\ast $, which implies that $ \theta_1 \ldots \theta_q \otimes \partial_{\chi_1} \ldots \partial_{\chi_p} \in\mathcal{K}^\ast_p$. By lemma \ref{kerteo} above, it generates the cohomology of $\mathcal{K}^\ast_\bullet $ as a $A$-supermodule and its parity depends on the sum $N=p+q$, so that the conclusion follows.
\end{proof}

\section{Super Koszul Complex and the Berezinian }

\noindent We now aim at interpreting the main result of the previous section, and we show that $Ext^p_R (A, R)$ transforms exactly as the (inverse of the) \emph{Berezinian module} of $V$. More precisely, we prove the following theorem.
%Finally, we show that $Ext_R^p (A, R) \cong \Pi^{p+q} A$ transforms exactly as the (inverse of the) Berezinian of $V$ under automorphisms $\varphi \in Aut_A (V)$.
\begin{theorem} Let $\phi \in Aut_A (V)$ be an automorphism of the free $A$-supermodule $V$. Then the induced automorphism $\widehat \varphi \in Aut_A (Ext^p_R (A, R))$ is given by the multiplication by the \emph{inverse} of the Berezinian of the automorphism $Ber (\varphi)^{-1}$, 
\bear
\xymatrix@R=1.5pt{
\widehat{\varphi} : Ext^p_R (A, R) \ar[r] & Ext^p_R (A, R) \\
\mathcal{D} \ar@{|->}[r] & Ber (\varphi)^{-1} \cdot \mathcal{D}
}
\eear
\end{theorem}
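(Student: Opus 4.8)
The plan is to reduce the claim to the evaluation of a single scalar, and then to check that scalar on a generating set. By Theorem \ref{dualhomology}, $Ext^p_R(A,R)$ is free of rank one over $A$ (up to parity), generated by the class $[\mathcal{D}]$ of $\mathcal{D}=\theta_1\ldots\theta_q\otimes\partial_{\chi_1}\ldots\partial_{\chi_p}$; hence $\widehat\varphi$, being $A$-linear, is multiplication by a scalar $c(\varphi)\in A^\times$, and the theorem amounts to the identity $c(\varphi)=Ber(\varphi)^{-1}$. Since the assignment $\varphi\mapsto\widehat\varphi$ is functorial (Section 4), $c$ is a group homomorphism $Aut_A(V)\to A^\times$; the Berezinian is multiplicative, so $Ber^{-1}$ is a homomorphism as well. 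It therefore suffices to verify $c(\varphi)=Ber(\varphi)^{-1}$ on generators of $Aut_A(V)$.

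To compute $c(\varphi)$ I would use the explicit form of the induced map from Section 4. Because $V\mapsto\mathcal{K}^{\ast}_\bullet$ is contravariant (cf.\ \eqref{contr}), the genuine covariant action $\widehat\varphi$ on cohomology is realized by letting $\varphi$ act through $Sym^\bullet$ on the factor $R=Sym^\bullet V$ (hence on the $\theta_j$'s) and by letting the inverse transpose $(\varphi^\pi)^{-t}$ act on the factor $R^{\pi\ast}=Sym^\bullet\Pi V^\ast$ (hence on the $\partial_{\chi_i}$'s), the inverse arising precisely from the contravariance. Writing $\varphi$ in block form, with even-even block $a$ ($p\times p$), odd-odd block $d$ ($q\times q$) and odd off-diagonal blocks $b,c$, the invertibility of $\varphi$ forces $a$ and $d$ to be invertible, since their off-diagonal corrections are nilpotent. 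Thus $\varphi$ admits the Gauss factorization into a lower unipotent factor, a block-diagonal factor $\mathrm{diag}(a-bd^{-1}c,\,d)$, and an upper unipotent factor, and by multiplicativity it is enough to treat these three types separately.

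For a block-diagonal $\varphi$ the computation is immediate and exact already at the chain level. Acting by $\varphi$ on the top odd product reproduces it up to the determinant of the odd-odd block, $\theta_1\ldots\theta_q\mapsto\det(d)\,\theta_1\ldots\theta_q$, by the standard top-form computation for a change of basis among anticommuting generators; dually, acting by the inverse transpose of the even-even block on $\partial_{\chi_1}\ldots\partial_{\chi_p}$ contributes the factor $\det(a)^{-1}$. Hence $\widehat\varphi(\mathcal{D})=\det(a)^{-1}\det(d)\,\mathcal{D}$, which is exactly $Ber(\varphi)^{-1}$ for a block-diagonal automorphism.

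The remaining and genuinely delicate point is the unipotent factors, for which $Ber=1$ and one must show $c=1$, i.e.\ that $\widehat\varphi$ fixes the class $[\mathcal{D}]$. Here $\widehat\varphi(\mathcal{D})$ is no longer a scalar multiple of $\mathcal{D}$ on the nose: each $\theta_j$ is replaced by $\theta_j+\sum_k b_{kj}x_k$ and, dually, each $\partial_{\chi_i}$ by $\partial_{\chi_i}-\sum_j b_{ij}\partial_{\ell_j}$, producing $\mathcal{D}$ together with several mixed terms. The crux is to show that the sum of these mixed terms lies in $\mathrm{im}(\delta^\ast)$ and therefore vanishes in cohomology. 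This is exactly where the multiplicative structure of the complex is used: since $\delta^\ast$ is multiplication by $\sum_i x_i\otimes\partial_{\chi_i}+\sum_j\theta_j\otimes\partial_{\ell_j}$, the mixed terms reassemble — after using $b_{kj}^2=0$ and the anticommutativity of the odd variables — into $\delta^\ast$ applied to an explicit element of $\mathcal{K}^\ast_\bullet$ (for $p=q=1$, for instance, the extra terms are precisely $\delta^\ast(b)$). Establishing this cancellation in general, together with the symmetric argument for the lower unipotent factor carried out on $\Pi V$, is the main obstacle; once it is in place, multiplicativity assembles the three cases into $c(\varphi)=Ber(\varphi)^{-1}$ for every $\varphi\in Aut_A(V)$.
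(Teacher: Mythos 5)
Your overall strategy coincides with the paper's: use functoriality to reduce to a multiplicative check, factor $\varphi$ via the Gauss decomposition into an upper unipotent, a block-diagonal, and a lower unipotent automorphism, and compute the scalar on each factor. Your treatment of the block-diagonal case, giving the factor $\det(a)^{-1}\det(d)$, is correct and agrees with the paper. The problem is that your argument stops exactly at the point you yourself call ``the main obstacle'': for the unipotent factors you only assert that the mixed terms should reassemble into an element of $\mathrm{im}\,(\delta^\ast)$, and you verify this only for $p=q=1$. As written this is not a proof: everything in the theorem beyond the diagonal case is precisely this cancellation, and a general unipotent automorphism produces many cross terms whose explicit expression as a multiple of $\delta^\ast$ you never exhibit.

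The paper closes this gap with two ideas that are missing from your proposal. First, a unipotent factor is itself a composition of \emph{elementary} automorphisms touching a single coordinate (e.g.\ $\theta_j'=\theta_j+\beta x_k$ with all other coordinates fixed), so by multiplicativity one may assume this form; the expansion of $\mathcal{D}$ in the primed variables then contains only two extra terms. Second --- and this is the key point that spares one from hunting for explicit preimages --- those extra terms are cycles whose bidegree differs from that of $\mathcal{D}$: they have strictly lower degree in the $\theta$'s, respectively in the $\partial_{\chi}$'s (equivalently, strictly positive degree in the variables $x_i$, $\partial_{\ell_j}$). By Theorem \ref{kosclas} and Lemma \ref{kerteo}, the homology of $(\mathcal{K}^\ast_\bullet,\delta^\ast)$ is concentrated in the single bidegree of $\mathcal{D}$ (top degree in the odd generators, degree zero in the even ones), so any cycle sitting in another bidegree is automatically a boundary and contributes nothing to the class. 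This degree argument is what makes the unipotent cases harmless in homology; your explicit-preimage approach could presumably be completed along the same lines (your $p=q=1$ computation is its simplest instance), but until either argument is carried out in general, the proof is incomplete.
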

\begin{proof} Let us fix the base $\{x_1, \ldots, x_p | \theta_1, \ldots, \theta_q \}$ for $V$. Then, $\varphi \in Aut_A (V)$ is represented by an invertible matrix $[M] \in GL(p|q, A)$
\bear
[M (\varphi)]_{\alpha \beta} = \left ( 
\begin{array}{c|c}
A & B \\
\hline 
C & D
\end{array}
\right ) = \left ( 
\begin{array}{c|c}
a_{hi} & b_{hj} \\
\hline c_{ki} & d_{kj}
\end{array}
\right )
\eear
where $A \in GL(p, A_0), D\in GL (q, A_0)$ are even and $B \in Hom (A^p, A^q)$, $C \in Hom ( A^q, A^p)$ are odd submatrices, such that one has the following transformations
\begin{align}
x^\prime_i \defeq \varphi (x_i) = \sum_{h=1}^p x_h a_{hi} + \sum_{k=1}^q \theta_k c_{ki}, \\
\theta^{\prime}_j \defeq \varphi (\theta_j) = \sum_{h=1}^p x_h b_{hj} + \sum_{k= 1}^q \theta_k d_{kj}.
\end{align}
We recall that if $\varphi_i$ for $i=1,2$ are automorphisms of $V$, we have a contravariant functorial construction (see the remarks around \eqref{contr}), such that $(\varphi_1 \circ \varphi_2)^\ast = \phi_2^\ast \circ \varphi_1^\ast : \mathcal{K}_{\bullet}^\ast \rightarrow \mathcal{K}_{\bullet}^\ast,$ therefore the product of two matrices $M (\varphi_1) \cdot M (\varphi_2) \in GL (p|q, A)$ corresponds to the product of two elements $ \widehat{\varphi}_{M(\varphi_2)} \cdot \widehat{\varphi}_{M(\varphi_1)} $ acting as automorphisms of $Ext^p_R (A,R) = H^p (\mathcal{K}^\ast_\bullet).$
We can thus use the decomposition 
\bear
\left (
\begin{array}{c|c}
A & B \\
\hline 
C & D 
\end{array} 
\right )= \left ( 
\begin{array}{c|c}
1 & BD^{-1} \\
\hline 
0 & 1
\end{array}
\right )
\left ( 
\begin{array}{c|c}
A - BD^{-1}C & 0 \\
\hline 
0 & D
\end{array}
\right )
\left ( 
\begin{array}{c|c}
1 & 0 \\
\hline 
D^{-1}C & 1
\end{array}
\right )
\eear
to reduce ourselves to the following cases.
\bear
(1): \;  M = \left (
\begin{array}{c|c}
A & 0 \\
\hline 
0 & D 
\end{array} 
\right ), \qquad
(2): \;  M = \left (
\begin{array}{c|c}
1 & 0 \\
\hline 
\ast & 1 
\end{array} 
\right ), \qquad
(3):  \;  M = \left (
\begin{array}{c|c}
1 & * \\
\hline 
0 & 1 
\end{array} 
\right ).
\eear 
Let us consider the class of the generator of the homology, we call it $ \mathcal{D} = \theta_1 \ldots \theta_q \otimes \partial_{\chi_1} \ldots \partial_{\chi_p} \in Ext^p_R (A, R)$ as above, and let us examine its transformation under automorphisms of the form (1), (2), (3) separately. \vspace{.2cm}\\
%\begin{enumerate}
\noindent (1)  In this case, it is simply to see that $\mathcal{D}$ transforms as $\det(D) \cdot \det (A)^{-1}$, as one has factorization of the transformations of the $\theta$'s - contributing with $\det (D)$ and of the $\partial_{\chi}$'s - contributing with $\det(A)^{-1}$.\vspace{.2cm}\\
(2) In this case one can observe that a generic automorphism of this forms is a composition of elementary automorphisms of the forms
\begin{align}
& x^\prime_i = x_i + \alpha \theta_k, \qquad  \quad \alpha \in A_1  \\
& x^\prime_l = x_l,   \qquad \qquad \qquad l \neq i, \\ 
& \theta^\prime_j = \theta_j.
\end{align}
It follows that one finds the following transformations: 
\begin{align}
& \partial_{x_l} = \partial_{x^\prime_l},  \qquad \qquad \forall l \\
& \partial_{\theta_j} = \partial_{\theta^\prime_j}, \qquad \qquad j \neq k \\
& \partial_{\theta_k} = \partial_{\theta_k^\prime} + \alpha \partial_{x^\prime_i}.
\end{align}
Thus, recalling that $\partial_{\chi_i} = \partial_{\pi x_i}$ and that $\partial_{\ell_j} = \partial_{\pi \theta_j}$, these are rewritten as
\begin{align}
& \partial_{\chi_l} = \partial_{\chi^\prime_l},  \qquad \qquad \forall l \\
& \partial_{\ell_j} = \partial_{\ell^\prime_j}, \qquad \qquad j  \neq k \\
& \partial_{\ell_k} = \partial_{\ell_k^\prime} + \alpha \partial_{\chi^\prime_i}.
\end{align}
In particular, one sees that $\mathcal{D}$ is invariant under these transformation.\vspace{.2cm}\\
(3) This is similar to the previous case. Indeed it can be observed again that a generic automorphism of this form is a composition of elementary automorphisms of the forms
\begin{align}
& x^\prime_i = x_i  \\
& \theta^\prime_j = \theta_j +  \beta x_k, \quad \qquad \beta \in A_1, \\ 
& \theta^\prime_m = \theta_m, \qquad \qquad \quad \, m \neq j.
\end{align}
Thus, similarly, one finds that
\begin{align}
& \partial_{\chi_k} = \beta \partial_{\ell^\prime_j} + \partial_{\chi_k^\prime},  \\
& \partial_{\chi_i} = \partial_{\chi^\prime_i}, \qquad \qquad i  \neq k \\
& \partial_{\ell_j} = \partial_{\ell_j^\prime} \qquad \quad \quad \; \; \forall j.
\end{align}
It follows that the transformations reads
\begin{align}
\theta_1\ldots \theta_q \otimes \partial_{\chi_1} \ldots \partial_{\chi_p} = & \; \theta^\prime_1\ldots \theta^\prime_q \otimes \partial_{\chi^\prime_1} \ldots \partial_{\chi^\prime_p} + \nonumber \\
& + (-1)^{j+1} \beta x_k \theta_1 \ldots \hat \theta_j \ldots \theta_q \otimes \partial_{\chi^\prime_1} \ldots \partial_{\chi^\prime_p} + \nonumber \\
& + (-1)^{k+1} \theta_1^\prime \ldots \theta_q^\prime \otimes \partial_{\ell_j} \partial_{\chi_1} \ldots \hat \partial_{\chi_k} \ldots \partial_{\chi_p},
\end{align}
where the element marked with a hat is missing. Noticing that the second and the third elements are of lower degree either in the $\theta$'s or in the $\partial_\chi$'s with respect to the generator $\mathcal{D} \in Ext^p_R (A, R)$, hence they do not contribute to the transformation of the homology class, which is again invariant.  \\
%\end{enumerate}
Combining the above cases, it follows that one has that $\widehat \varphi_{M(\varphi)} = \det (D) \det (A- BD^{-1}C)^{-1}$, which is nothing but $Ber(M (\varphi))^{-1}$ as claimed.
\end{proof}
%\begin{remark}
\noindent The previous theorem shows that an automorphism $\varphi_V \in Aut_A (V)$ induces an automorphism $\hat \varphi \in Aut_A (Ext^p_R (A, R))$ that is given by the multiplication of the inverse of the Berezinian of the trasformation,
\bear
\xymatrix@R=1.5pt{
\widehat \varphi_V : Ext^p_{Sym^\bullet V} (A, Sym^\bullet V) \ar[r] & Ext^p_{Sym^\bullet V} (A, Sym^\bullet V) \\
\mathcal{D} \ar@{|->}[r] & Ber (\varphi_V)^{-1} \cdot \mathcal{D}. 
}
\eear 
Clearly, if one considers instead of $V$ its \emph{dual} $V^\ast = Hom_A (V, A)$ and the related homology of the dual of the super Koszul complex one finds
\bear
\xymatrix@R=1.5pt{
\widehat \varphi_{V^\ast} : Ext^p_{Sym^\bullet V^\ast} (A, Sym^\bullet V^\ast) \ar[r] & Ext^p_{Sym^\bullet V^\ast} (A, Sym^\bullet V^\ast) \\
\mathcal{D}^\ast \ar@{|->}[r] & Ber (\varphi_V) \cdot \mathcal{D}^\ast. 
}
\eear 
%\end{remark}
\noindent This remark naturally lead to the following definition.
\begin{definition}[Berezinian of a Free $A$-Module] Let $V$ be a free $A$-supermodule of rank $p|q$ for $A$ any superalgebra. Then we call the Berezinian of $V$ the free $A$-supermodule of rank $\delta_{0, (p+q)\mbox{\scriptsize{mod}} 2} | \delta_{1, (p+q)\mbox{\scriptsize{mod}}2} $ given by
\bear
Ber (V) \defeq Ext^p_R (A, R) \cong \Pi^{p+q}A, 
\eear
where $R \defeq Sym_{A}^\bullet V^\ast$ and $A = R / I_{max}$ for $I_{max} \defeq \bigoplus_{k\geq 0} Sym^k V^\ast$.
\end{definition} 
\noindent Then, by the previous remark, an automorphism $\varphi :V \rightarrow V$ induces an automorphism $Ber (\varphi) : Ber (V) \rightarrow Ber (V)$ which is given by the multiplication by $Ber (\phi)$ and such that $Ber (\varphi_1 \circ \varphi_2) = Ber (\varphi_2) Ber (\varphi_1).$ \\

\noindent As a conclusive remark, let us stress that this construction \virgolette parallels'' in superalgebra the ordinary construction of the determinant or canonical module of a free module $V$ via homology of its related Koszul complex. In this sense, it should be clear the deep meaning behind the \virgolette slogan'' that \emph{the Berezinian replaces the determinant} when passing from a commutative setting to a supercommutative setting: notice further that the provided construction of the Berezinian module via super Koszul complex reduces to the construction of the ordinary determinant module via Koszul complex if $V$ purely even, \emph{i.e.} it is of rank $p|0$. \\
Furthermore, notice that the construction is readily generalizable from algebra to geometry, just by substituting the $Ext$-module with the $\mathcal{E}xt$-sheaf. In particular, over a supermanifold $\mani \defeq (|\mani|, \mathcal{O}_\mani)$ (see \cite{BR, CCF, Manin}) the structure sheaf $\mathcal{O}_\mani$ plays the role of $A$, while the free supermodule $V$ becomes a locally-free sheaf $\mathcal{E}$ of rank $p|q$ over $\mani$ (see \cite{Univ}). In particular, the choice of $\mathcal{E} \defeq \Omega^1_\mani$, the \emph{cotangent sheaf} of $\mani$, leads to what is usually called the \emph{Berezinian sheaf} of $\mani$, \emph{i.e.}\ $\mathcal{B}er (\mani) \defeq \mathcal{E}xt^p_{Sym^\bullet (\Omega^1_\mani)^\ast} (\stsheaf, Sym^\bullet (\Omega^1_\mani)^\ast)$, locally generated by 
\bear
\mathcal{D} = dx_1 \ldots dx_p \otimes \partial_{\theta_1} \ldots \partial_{\theta_q} \in \mathcal{B}er (\mani) \defeq \mathcal{E}xt^p_{Sym^\bullet (\Omega^1_\mani)^\ast} (\stsheaf, Sym^\bullet (\Omega^1_\mani)^\ast)
\eear
if $\{ x_1, \ldots, x_p | \theta_1, \ldots, \theta_q \}$ are local coordinates for $\mani$. Notice that this holds true in any geometric category, either \emph{smooth, analytic} of \emph{algebraic}.

\end{document}